\def\ps@firstpage{\ps@plain
  \def\@oddfoot{\normalfont\scriptsize \hfil\rule{0pt}{25pt}\thepage\hfil
     \global\topskip\normaltopskip}
  \let\@evenfoot\@oddfoot
  \def\@oddhead{\@serieslogo\hss}
  \let\@evenhead\@oddhead
}
\newcommand{\ET}{Erd\H{o}s--Tur\'{a}n~}
\author{Cole Graham}
\address{Department of Mathematics, Stanford University, 450 Jane Stanford Way, Building 380, Stanford, CA 94305, USA}
\email{\tt grahamca@stanford.edu}
\title[Irregularity of distribution in Wasserstein distance]{Irregularity of distribution\\in Wasserstein distance}
\begin{document}

\maketitle

\begin{abstract}
  We study the non-uniformity of probability measures on the interval and the circle.
  On the interval, we identify the \mbox{Wasserstein-$p$} distance with the classical $L^p$-discrepancy.
  We thereby derive sharp estimates in Wasserstein distances for the irregularity of distribution of sequences on the interval and the circle.
  Furthermore, we prove an $L^p$-adapted \ET inequality.
\end{abstract}

\section{Introduction}
\label{sec:intro}

We consider the classical question of irregularity of distribution: if we successively place points in a box, how evenly can we space them?
Answers encompass a vast body of theoretical and numerical work.
Rather than cite all related literature, we direct the reader to the excellent survey \cite{Bilyk} and monograph \cite{BC}.

In this note, we restrict our attention to one dimension.
Given a sequence of points $(x_n)_{n \in \N}$ in the interval $I = [0, 1)$ how well can the empirical measures
\begin{equation}
  \label{eq:empirical}
  \mu_N \coloneqq \frac 1 N \sum_{n=1}^N \delta_{x_n}
\end{equation}
approximate the uniform distribution $\lambda$?
Our answer involves several distances $d$ on the space of probability measures $\m P(I)$, but all agree that if the first $N$ points are evenly spaced on $I$,
\begin{equation}
  \label{eq:even-spacing}
  d(\mu_N, \lambda) \sim \frac{C}{N}.
\end{equation}
However, the truncations $(x_n)_{n=1}^N$ of a fixed sequence $(x_n)_{n \in \N}$ cannot \emph{all} be evenly spaced, so we naturally wonder whether \eqref{eq:even-spacing} can hold for \emph{all} $N \in \N$.
Indeed, van der Corput conjectured and van Aardenne-Ehrenfest confirmed that such uniform even spacing is impossible \cite{vdC1, vAE1}.

These classical results concern distances $d$ based on the \emph{discrepancy function}
\begin{equation}
  \label{eq:discrepancy}
  D_N(x) \coloneqq \mu_N([0, x)) - x \quad \textrm{for } x \in I.
\end{equation}
For $p \in [1, \infty]$, the \emph{$L^p$-discrepancy} of a sequence $(x_n)$ at stage $N$ is $\norm{D_N}_{L^p}$.
Thus the $L^\infty$-discrepancy is simply the Kolmogorov--Smirnov statistic \cite{Kolmogorov}.
The following theorem sharply answers van der Corput's conjecture in $L^p$-discrepancy, and unites the work of numerous authors.
To state it cleanly, we let
\begin{equation}
  \al_p \coloneqq
  \begin{cases}
    \frac 1 2 & \textrm{if } p \in [1, \infty),\\
    1 & \textrm{if } p = \infty.
  \end{cases}
\end{equation}

\begin{theorem}[\cite{Roth, Schmidt, Halasz, Lerch, vdC2, Davenport, Chen}]
  \label{thm:disc-irreg}
  For every $p \in [1, \infty]$, there exists a constant $C_p > 0$ such that for any sequence $(x_n)_{n \in \N} \subset I$,
  \begin{equation}
    \norm{D_N}_{L^p} \geq C_p \frac{\log^{\al_p}N}{N}
  \end{equation}
  holds for infinitely many $N \in \N$.
  Furthermore, this bound is sharp.
\end{theorem}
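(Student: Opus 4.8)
The plan is to treat the lower bound of Theorem~\ref{thm:disc-irreg} and its sharpness separately, since they rely on essentially unrelated techniques. \emph{Lower bound.} I would reduce the one-dimensional sequence problem to a two-dimensional \emph{point-set} problem, a transference going back to van der Corput and systematized by Roth. Given $(x_n) \subset I$ and $N \in \N$, form $P_N \coloneqq \{(n/N, x_n) : 1 \le n \le N\} \subset [0,1)^2$ with box-counting discrepancy $\Delta_{P_N}(s,t) \coloneqq \#\{n \le N : n/N < s,\ x_n < t\} - Nst$. A direct count gives $\Delta_{P_N}(s,t) = \lfloor Ns \rfloor D_{\lfloor Ns \rfloor}(t) + O(1)$, hence for $q \in [1,\infty)$
\begin{equation*}
  \norm{\Delta_{P_N}}_{L^q([0,1)^2)}^q \le \frac{2^{q-1}}{N} \sum_{n=1}^{N} n^q \norm{D_n}_{L^q(I)}^q + O(1),
\end{equation*}
and $\norm{\Delta_{P_N}}_{L^\infty} \le \max_{n \le N} n \norm{D_n}_{L^\infty} + O(1)$. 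Were the asserted bound to hold for only finitely many $n$, with a fixed small $C_p$, these estimates would force $\norm{\Delta_{P_N}}_{L^q} \ll C_p (\log N)^{\al_q}$ for all large $N$ --- taking $q = 2$ when $p \ge 2$ and $q = 1$ when $p \in [1,2)$, using $\norm{D_N}_{L^p} \ge \norm{D_N}_{L^2}$ resp.\ $\norm{D_N}_{L^p} \ge \norm{D_N}_{L^1}$ on the probability space $I$ and $\al_q = \al_p = \tfrac12$; and $q = \infty$ when $p = \infty$, with $\al_\infty = 1$. This contradicts the two-dimensional point-set lower bounds $\norm{\Delta_P}_{L^q} \gg_q (\log N)^{\al_q}$, valid for every $N$-point set $P \subset [0,1)^2$, once $C_p$ is smaller than the implied constant.

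\emph{The point-set bounds.} For $q = 2$ this is Roth's orthogonal-function method: with $r \approx \log_2 N$, test $\Delta_P$ against $F \coloneqq \sum_{j=0}^{r} f_j$, where $f_j$ is the $r$-function $\sum_R \varepsilon_R h_R$ --- a $\pm1$-valued sum of $L^\infty$-normalized Haar functions $h_R$ over the dyadic boxes $R$ of area $\approx 1/N$ and aspect ratio $(2^{-j}, 2^{j-r})$, the signs $\varepsilon_R \in \{\pm1\}$ to be chosen. An empty box contributes $\asymp 1/N$ to $\langle \Delta_P, h_R \rangle$, most boxes are empty, so a suitable choice of signs gives $\langle \Delta_P, f_j \rangle \gg 1$ for every $j$; as $r$-functions of distinct shapes are orthogonal, $\langle \Delta_P, F \rangle \gg r$ while $\norm{F}_{L^2}^2 \le r+1$, and Cauchy--Schwarz yields $\norm{\Delta_P}_{L^2} \gg \sqrt{r}$. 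The range $1 < q < \infty$ comes from an $L^q$ variant of this (Schmidt). The endpoint $q = 1$ is Hal\'{a}sz's theorem: orthogonality is no longer available, and one instead tests against a Riesz product formed from the $f_j$ with a small multiplier, a function of bounded $L^1$-norm whose pairing with $\Delta_P$ retains a main term of size $(\log N)^{1/2}$ once the higher-order cross terms are absorbed. The case $q = \infty$ is Schmidt's theorem (also reachable by a Riesz-product argument of Hal\'{a}sz).

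\emph{Sharpness.} For $p = \infty$ I would exhibit a sequence with $\norm{D_N}_{L^\infty} \ll (\log N)/N$: the van der Corput (binary radical-inverse) sequence, for which $N D_N$ is controlled digit-by-digit from the binary expansion of $N$, or a Kronecker sequence $x_n = \{n\alpha\}$ with $\alpha$ an irrational of bounded partial quotients, for which the three-distance theorem (equivalently, Ostrowski expansions) gives the bound. For $p \in [1,\infty)$ the exponent drops to $\tfrac12$, so a construction exhibiting genuine cancellation is required: Davenport's symmetrized sequence, obtained by interleaving $\{n\alpha\}$ with $1 - \{n\alpha\}$, has the property that in the Fourier expansion of $D_N$ the modes responsible for the $\log N$ bias cancel between the two interleaved halves, leaving $\norm{D_N}_{L^2} \ll (\log N)^{1/2}/N$; Chen's refinement upgrades this to $\norm{D_N}_{L^p} \ll_p (\log N)^{1/2}/N$ for all $p < \infty$, and since $\norm{\cdot}_{L^1} \le \norm{\cdot}_{L^p}$ on $I$ the single construction covers the whole range. (The ``infinitely many $N$'' cannot be improved to ``all large $N$'': such a sequence has discrepancy of order $1/N$ along a positive-density set of $N$.)

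\emph{Main obstacle.} The hard part is Hal\'{a}sz's $q = 1$ point-set bound: the orthogonality that drives Roth's argument collapses at $L^1$, and the Riesz-product test function must be chosen so that its $L^1$-norm stays bounded while the leading linear term survives the higher-order products --- calibrating the multiplier against the number of scales is the crux. A secondary difficulty is the $p < \infty$ sharpness: the naive one-sided estimates for van der Corput or plain Kronecker sequences only give $(\log N)/N$, and the gain to $(\log N)^{1/2}/N$ genuinely needs the symmetrization to cancel the dominant Fourier modes.
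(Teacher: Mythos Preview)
The paper does not itself prove Theorem~\ref{thm:disc-irreg}; it is stated with citations to the original sources and then used as input for Theorem~\ref{thm:Wass-irreg}. Your sketch is a faithful and essentially correct outline of those classical arguments: the Roth transference from one-dimensional sequences to two-dimensional point sets, followed by the orthogonal-function and Riesz-product lower bounds of Roth, Schmidt, and Hal\'asz, and the constructive upper bounds of van der Corput, Davenport, and Chen. This is exactly the machinery the paper invokes when it proves the circle analog (Lemma~\ref{lem:irreg-cyl} and the proof of Theorem~\ref{thm:Wass-irreg}), there citing Bilyk's exposition of Roth and Hal\'asz rather than redeveloping the test functions from scratch. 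So your route and the paper's route through the literature coincide.

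One small inaccuracy worth noting: your description of the $p<\infty$ sharpness as ``Davenport's symmetrized sequence, obtained by interleaving $\{n\alpha\}$ with $1-\{n\alpha\}$'' conflates a two-dimensional point-set construction with a one-dimensional sequence. Davenport's original argument produces, for each fixed $N$, a $2N$-point set in $[0,1)^2$ (the graph of a Kronecker sequence together with its reflection) whose $L^2$ discrepancy is $\ll\sqrt{\log N}$; these point sets do not nest, and na\"ively interleaving $\{n\alpha\}$ and $\{-n\alpha\}$ as a single one-dimensional sequence does not directly yield $\|D_N\|_{L^p}\ll(\log N)^{1/2}/N$ for \emph{all} $N$. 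The one-dimensional sharpness genuinely requires either a transference from two-dimensional point sets back to sequences, or the more delicate constructions of Chen. This does not affect the logic of your argument, but the attribution of the construction should be tightened.
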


\begin{remark}
  Since $D_N$ is an antiderivative of $\mu_N - \lambda$, we in fact have
  \begin{equation}
    \label{eq:disc=Sob}
    \norm{D_N}_{L^p} = \norm{\mu_N - \lambda}_{\dot W^{-1, p}}.
  \end{equation}
  Thus Theorem~\ref{thm:disc-irreg} quantifies the non-uniformity of $\mu_N$ in the negative Sobolev norm $\norm{\anon}_{\dot W^{-1, p}}$, which we define in Section~\ref{sec:W=S}.
\end{remark}

In this note, we are interested in the distance between $\mu_N$ and $\lambda$ in Wasserstein metrics.
These hail from the venerable theory of optimal transport, but their application to irregularity of distribution appears to be recent \cite{Steinerberger, BS}.
We recall that the Wasserstein-$p$ distance on a metric space $X$ measures the $L^p$-weighted cost of transporting one probability measure to another.
Precisely, for $p \in [1, \infty]$ and $\mu, \nu \in \m P(X)$,
\begin{equation}
  \label{eq:Wasserstein}
  W_p^X(\mu, \nu) \coloneqq \inf_{\pi \in \Pi(\mu, \nu)} \norm{d_X}_{L^p(\pi)}
\end{equation}
where $d_X\colon X \times X \to [0,\infty)$ denotes the metric on $X$, and $\Pi(\mu, \nu)$ denotes the set of \emph{coupling measures} from $\mu$ to $\nu$, i.e. the set of probability measures on $X \times X$ with marginals $\mu$ and $\nu$.

For us, $X$ will be the interval $I = [0, 1)$ or the circle $\T \coloneqq \R/\Z$.
In these one-dimensional spaces, the optimal coupling in \eqref{eq:Wasserstein} assumes a particularly simple form.
This allows us to identify the Wasserstein and Sobolev metrics.
\begin{proposition}
  \label{prop:W=S}
  Let $X$ be $I$ or $\T$, and let $\lambda$ denote the uniform measure on $X$.
  Then for all $p \in [1, \infty]$ and $\mu \in \m P(X)$,
  \begin{equation}
    \label{eq:W=S}
    W_p^X(\mu, \lambda) = \norm{\mu - \lambda}_{\dot W^{-1,p}(X)}.
  \end{equation}
\end{proposition}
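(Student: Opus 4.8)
The plan is to exploit the one-dimensional structure: on $I$ or $\T$, optimal transport to a fixed measure is governed by the (generalized) inverse of the cumulative distribution function. First I would recall the classical fact that for probability measures on $\R$ the Wasserstein-$p$ distance equals the $L^p(0,1)$ distance of the quantile functions (the right-continuous inverses of the CDFs); equivalently, the optimal coupling is the monotone one. Applied with $\nu = \lambda$, whose quantile function is the identity on $(0,1)$, this gives $W_p^I(\mu,\lambda) = \norm{F_\mu^{-1} - \mathrm{id}}_{L^p(0,1)}$, where $F_\mu(x) = \mu([0,x))$. The key observation is then that the graph of $F_\mu^{-1} - \mathrm{id}$ is, up to reflection across the diagonal, the graph of the function $t \mapsto F_\mu(t) - t = D(t) + (\mu_N\text{-dependent jump corrections})$ — more precisely, $x \mapsto F_\mu(x) - x$ and $y \mapsto F_\mu^{-1}(y) - y$ are "inverse" to each other in the sense that their signed-area regions between the curve and the axis coincide. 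Hence the two $L^p$ norms are equal, which is exactly \eqref{eq:W=S} once we identify $\norm{D}_{L^p}$ with $\norm{\mu - \lambda}_{\dot W^{-1,p}}$ as in \eqref{eq:disc=Sob}.

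Concretely, the steps I would carry out are: (1) state and prove (or cite) the monotone-coupling identity $W_p^I(\mu,\lambda) = \norm{F_\mu^{-1} - \mathrm{id}}_{L^p(0,1)}$; (2) establish the "area-preserving" relation between $\{(x,t): t \text{ between } 0 \text{ and } F_\mu(x)-x\}$ and $\{(y,s): s \text{ between } 0 \text{ and } F_\mu^{-1}(y)-y\}$ — this is a Fubini/layer-cake argument, writing $\int |F_\mu^{-1}(y)-y|^p\,dy = \int_0^\infty p r^{p-1} |\{y : |F_\mu^{-1}(y)-y| > r\}|\,dr$ and showing the super-level sets have the same measure as those of $|F_\mu(x)-x|$; (3) conclude $W_p^I(\mu,\lambda) = \norm{F_\mu - \mathrm{id}}_{L^p(0,1)}$, which by definition of the discrepancy function and \eqref{eq:disc=Sob} equals $\norm{\mu-\lambda}_{\dot W^{-1,p}(I)}$. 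For the circle $\T$, I would reduce to the interval case: optimal transport on the circle to the uniform measure still admits a monotone optimal map, but only after choosing the correct "cut point" (equivalently, after subtracting the best constant, which corresponds to the mean of the discrepancy function); this matches the homogeneous, mean-zero nature of $\dot W^{-1,p}(\T)$.

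The main obstacle I expect is step (2) together with the careful handling of atoms: since $\mu_N$ (and general $\mu$) may have atoms, $F_\mu$ has jumps and $F_\mu^{-1}$ has flat pieces, so "inverse function" must be interpreted in the generalized (quantile) sense, and the graph-reflection argument needs the layer-cake formulation to remain valid — one must check that horizontal jumps of $F_\mu$ correspond exactly to vertical jumps of $F_\mu^{-1}$ in a measure-preserving way. A secondary subtlety is the $p = \infty$ endpoint, where the $L^\infty$ norm of the quantile deviation must be matched with $\norm{D}_{L^\infty}$; this follows from the same super-level-set identity by taking $p \to \infty$, but deserves a separate remark. Finally, for $\T$ I must verify that the optimal cut point indeed yields the subtraction of the mean of $D$ and that no better (non-monotone) coupling exists — this is where I would invoke the known structure of optimal transport on the circle (e.g. from the monograph literature), rather than reprove it from scratch.
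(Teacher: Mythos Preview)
Your proposal is correct and follows essentially the same route as the paper: the quantile representation of $W_p$, then the equimeasurability of $F_\mu - \mathrm{id}$ with $\mathrm{id} - F_\mu^{-1}$ (the paper packages this as the change-of-variables Lemma~\ref{lem:headturn}, proved via $\mathcal{C}^1$ approximation rather than your layer-cake argument, and on $\T$ cites \cite{DSS} for the monotone structure of circle optimal plans), and finally the identification with $\dot W^{-1,p}$ via Lemma~\ref{lem:antideriv}. One slip to correct: on $\T$ the optimal constant to subtract is the mean of the discrepancy only when $p = 2$; for general $p$ it is simply the minimizer of $y \mapsto \|F - y\|_{L^p(\T)}$, which is exactly the quotient norm recorded in \eqref{eq:antideriv-circle}.
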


\begin{remark}
  This identity extends Kantorovich--Rubinstein duality to $p > 1$, and clarifies a well-known infinitesimal equivalence between $W_2$ and $\dot H^{-1}$ \cite{KR, OV, BB}.
  In this $p = 2$ case, it likewise sharpens a bound of Peyre \cite{Peyre}, who investigated the relationship between $W_2$ and $\dot H^{-1}$ in detail.
  However, these prior results hold in more general settings, while \eqref{eq:W=S} seems restricted to distances from the uniform measure in one dimension.
  We note that on the interval, the $p = 2$ case of \eqref{eq:W=S} appears in \cite[Ex.~64]{Santambrogio}.
\end{remark}

With this identity, we can use classical theory to establish irregularity of distribution in Wasserstein metrics on the interval and circle.
\begin{theorem}
  \label{thm:Wass-irreg}
  Let $X$ be $I$ or $\T$.
  For every $p \in [1, \infty]$ there exists a constant $C_p > 0$ such that for any sequence $(x_n)_{n \in \N} \subset X$,
  \begin{equation}
    W_p^X(\mu_N, \lambda) \geq C_p\frac{\log^{\al_p} N}{N}
  \end{equation}
  holds for infinitely many $N \in \N$.
  Furthermore, this bound is sharp.
\end{theorem}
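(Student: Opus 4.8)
The plan is to deduce the theorem from Proposition~\ref{prop:W=S} together with the classical Theorem~\ref{thm:disc-irreg}, treating the interval and the circle separately. On the interval the argument is immediate: combining the identity \eqref{eq:W=S} with \eqref{eq:disc=Sob} gives $W_p^I(\mu_N, \lambda) = \norm{D_N}_{L^p}$ for every $N$, so both the lower bound (along infinitely many $N$) and its sharpness are exactly the content of Theorem~\ref{thm:disc-irreg}.

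For the circle I would first reduce $W_p^\T$ to a discrepancy quantity. By Proposition~\ref{prop:W=S}, $W_p^\T(\mu_N, \lambda) = \norm{\mu_N - \lambda}_{\dot W^{-1, p}(\T)}$. Lifting each $x_n$ to its representative in $I = [0, 1)$, the discrepancy function satisfies $D_N(0) = 0 = \lim_{x \uparrow 1} D_N(x)$ because all $N$ points lie in $I$; hence $D_N$ descends to $\T$, where it is an antiderivative of $\mu_N - \lambda$. Since an antiderivative on $\T$ is determined only modulo additive constants, working from the definition of $\dot W^{-1, p}(\T)$ in Section~\ref{sec:W=S} one finds $W_p^\T(\mu_N, \lambda) = \inf_{a \in \R} \norm{D_N - a}_{L^p(\T)}$, the \emph{centered} $L^p$-discrepancy at stage $N$. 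Sharpness on the circle then follows from sharpness on the interval: any coupling of $\mu_N$ and $\lambda$ on $I \times I$ is also a coupling of the corresponding measures on $\T \times \T$, and the circle metric $d_\T(x,y) = \min(|x-y|, 1 - |x-y|)$ is pointwise dominated by the interval metric, so $W_p^\T(\mu_N, \lambda) \le W_p^I(\mu_N, \lambda)$ for all $N$; thus any sequence witnessing sharpness in Theorem~\ref{thm:disc-irreg} does so on $\T$ as well.

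It remains to prove the lower bound on $\T$, namely that $\inf_{a} \norm{D_N - a}_{L^p} \ge C_p \log^{\al_p} N / N$ for infinitely many $N$. When $p = \infty$ this is elementary: $\inf_a \norm{D_N - a}_{L^\infty} = \tfrac 1 2(\sup D_N - \inf D_N)$, and $D_N(0) = 0$ forces $\inf D_N \le 0 \le \sup D_N$, whence $\sup D_N - \inf D_N \ge \norm{D_N}_{L^\infty}$ and the $p = \infty$ case of Theorem~\ref{thm:disc-irreg} applies. When $p \in [1, \infty)$, since $\norm{\anon}_{L^1} \le \norm{\anon}_{L^p}$ on the probability space $\T$ it suffices to treat $p = 1$; for this I would inspect the classical Fourier-analytic proof of the $L^1$-discrepancy lower bound underlying Theorem~\ref{thm:disc-irreg}. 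That argument produces, for infinitely many $N$, a function $\psi$ with $\norm{\psi}_{L^\infty} \le 1$ and $\int_\T D_N \psi \ge c \log^{1/2} N / N$, and $\psi$ may be taken to have zero mean (it is built from products of mean-zero dyadic step functions). Then $\int_\T (D_N - a) \psi = \int_\T D_N \psi$ for every $a \in \R$, and $L^1$--$L^\infty$ duality gives $\inf_a \norm{D_N - a}_{L^1} \ge \int_\T D_N \psi \ge c \log^{1/2} N / N$ along the same subsequence, completing the proof.

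The one place that requires genuine work is this last step: upgrading the $L^p$-discrepancy lower bounds to their centered counterparts. For $p = \infty$ the upgrade is trivial, but for $p < \infty$ one must look inside the classical argument to extract a mean-zero test function. I expect this to be the main --- indeed the only nontrivial --- obstacle, everything else being a direct application of Proposition~\ref{prop:W=S} and Theorem~\ref{thm:disc-irreg}.
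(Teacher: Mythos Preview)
Your proposal is correct and its overall architecture matches the paper's: the interval case and the circle upper bounds are handled identically, and for the circle lower bounds you correctly identify that the crux is the \emph{centered} discrepancy $\inf_a\|D_N-a\|_{L^p}$, to be controlled by observing that the classical test functions are mean-zero.

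There are two differences worth flagging. First, your $p=\infty$ argument on $\T$ is genuinely simpler than the paper's: the oscillation bound $\inf_a\|D_N-a\|_{L^\infty}=\tfrac12(\sup D_N-\inf D_N)\ge\tfrac12\|D_N\|_{L^\infty}$ (using $D_N(0)=0$) reduces directly to Theorem~\ref{thm:disc-irreg} and avoids the two-dimensional machinery altogether. Second, for $p<\infty$ you reduce to $p=1$ and then say you would ``inspect the classical Fourier-analytic proof'' to find a mean-zero $\psi$ in one dimension. This is the right idea, but be aware that the classical $L^1$ lower bound (Hal\'asz) is proved in two dimensions on $I^2$ and only transferred to one dimension via Roth's embedding $(x_n)\mapsto\{(x_n,(n-1)/m)\}$ and a Fubini slice; so your $\psi$ is really a horizontal slice $\Phi(\,\cdot\,,t)$ of the two-dimensional Riesz-product test function $\Phi$, and its mean-zero property comes from $\Phi$ being mean-zero on horizontal slices. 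The paper makes this explicit by formulating and proving Lemma~\ref{lem:irreg-cyl}, a bound on the two-dimensional discrepancy in the quotient norm $L^p(\T\times I)/V^p$, and then running Roth's reduction in that quotient. Your route trades this formalism for the $L^1\!\le L^p$ shortcut, which is economical but leaves the two-dimensional step implicit.
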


\begin{remark}
  To our knowledge, irregularity results on the circle have not appeared previously in the literature.
  The distinction between $I$ and $\T$ may seem trivial, but there exist sequences on $I$ that are asymptotically more evenly distributed when viewed on $\T$.
  Indeed, we shall identify $W_2^\T(\mu_N,\lambda)$ with the classical \emph{diaphony}~\cite{Zinterhof, SZ}.
  When $(x_n)$ is the van der Corput sequence~\cite{vdC2}, the asymptotic difference between $W_2^\T(\mu_N,\lambda)$ and $W_2^I(\mu_N,\lambda)$ follows from~\cite{PG}.
\end{remark}

\begin{remark}
  The lower bounds in Theorem~\ref{thm:Wass-irreg} answer several open questions posed by Steinerberger and Brown in \cite{Steinerberger, BS}.
\end{remark}

We close with an explicit \emph{upper} bound on non-uniformity.
The Hausdorff--Young inequality bounds the Sobolev distance $\norm{\mu - \lambda}_{\dot W^{-1,p}(\T)}$ via the Fourier coefficients $\h \mu$ when $p \in [2, \infty]$.
However, we might hope that non-uniformity to scale $\frac 1 n$ could be detected by the first $n$ Fourier coefficients.
When $p = \infty$, this is a consequence of the classical \ET inequality.
We develop an $L^p$ variant for all $p \in [2, \infty]$.
\begin{proposition}
  \label{prop:ET}
  There exists universal $C>0$ such that for all $p \in [2,\infty]$, $n \in \N$, and $\mu \in \m P(\T)$,
  \begin{equation}
    \label{eq:ET-circle}
    \norm{\mu - \lambda}_{\dot W^{-1,p}(\T)} \leq \frac C n + C\left(\sum_{k=1}^{n-1}\frac{\abs{\h \mu(k)}^q}{k^q}\right)^{\frac 1 q},
  \end{equation}
  where $q \in [1, 2]$ is the H\"older-conjugate of $p$.
  Furthermore, let $\smallbar D$ denote the mean of the discrepancy function of $\mu$.
  Then on the interval,
  \begin{equation}
    \label{eq:ET-interval}
    \norm{\mu - \lambda}_{\dot W^{-1,p}(I)} \leq \frac{C}{n} + C\left(\sum_{k=1}^{n-1}\frac{\abs{\h \mu(k)}^q}{k^q} + \abs{\smallbar D}^q\right)^{\frac 1 q}.
  \end{equation}
\end{proposition}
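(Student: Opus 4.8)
The plan is to reduce both inequalities to a single estimate on the circle, which I would then prove by an $L^p$ version of the classical extremal‑polynomial proof of the \ET inequality. Write $\psi$ for the $1$-periodic sawtooth $\psi(x) = \{x\} - \tfrac{1}{2}$, so that $\h\psi(0) = 0$ and $\h\psi(k) = -\frac{1}{2\pi i k}$ for $k \neq 0$. Then $\mu * \psi$ is an antiderivative of $\lambda - \mu$ with vanishing mean, so $\norm{\mu - \lambda}_{\dot W^{-1,p}(\T)} \leq \norm{\mu * \psi}_{L^p(\T)}$, since the homogeneous Sobolev norm of a mean‑zero distribution is the infimum of the $L^p$-norms of its antiderivatives. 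On the interval, a short computation with fractional parts shows that the discrepancy function $D(x) = \mu([0,x)) - x$ satisfies $D = \smallbar D - \mu * \psi$ almost everywhere; since $\mu * \psi$ is periodic, \eqref{eq:disc=Sob} then yields $\norm{\mu - \lambda}_{\dot W^{-1,p}(I)} = \norm{D}_{L^p(I)} \leq \abs{\smallbar D} + \norm{\mu * \psi}_{L^p(\T)}$, which already accounts for the extra $\abs{\smallbar D}^q$ term in \eqref{eq:ET-interval}. It therefore remains to bound $\norm{\mu * \psi}_{L^p(\T)}$ by the right‑hand side of \eqref{eq:ET-circle}.

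For this I would invoke Vaaler's extremal trigonometric majorant and minorant \cite{Vaaler} for the sawtooth: there exist trigonometric polynomials $\psi^{+}, \psi^{-}$ of degree at most $n-1$ with $\psi^{-} \leq \psi \leq \psi^{+}$ pointwise and $\int_\T(\psi^{+} - \psi^{-}) \leq \tfrac{1}{n}$. From $\abs{\psi^{\pm} - \psi} \leq \psi^{+} - \psi^{-}$ one gets $\norm{\psi^{\pm} - \psi}_{L^1(\T)} \leq \tfrac1n$, hence $\abs{\h{\psi^{\pm}}(k) - \h\psi(k)} \leq \tfrac1n$ for every $k$; in particular $\abs{\h{\psi^{\pm}}(0)} \leq \tfrac1n$ and $\abs{\h{\psi^{\pm}}(k)} \leq C/\abs{k}$ for $1 \leq \abs k \leq n-1$, with $C$ absolute. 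Because $\mu \geq 0$, convolution preserves the pointwise ordering, $\mu * \psi^{-} \leq \mu * \psi \leq \mu * \psi^{+}$, so $\abs{\mu * \psi}^p \leq \abs{\mu * \psi^{+}}^p + \abs{\mu * \psi^{-}}^p$ and thus $\norm{\mu*\psi}_{L^p(\T)} \leq 2^{1/p}\max_{\pm}\norm{\mu*\psi^{\pm}}_{L^p(\T)}$. Finally $\mu * \psi^{\pm}$ is a trigonometric polynomial of degree $\leq n-1$ with Fourier coefficients $\h{\psi^{\pm}}(k)\,\h\mu(k)$, so the Hausdorff--Young inequality on $\T$ — valid for all $p \in [2,\infty]$ with conjugate $q \in [1,2]$ — gives
\begin{equation*}
  \norm{\mu*\psi^{\pm}}_{L^p(\T)} \leq \Big(\sum_{\abs k \leq n-1}\abs{\h{\psi^{\pm}}(k)}^q\abs{\h\mu(k)}^q\Big)^{1/q} \leq \Big(\tfrac{1}{n^q} + C^q\!\!\sum_{1\leq\abs k\leq n-1}\tfrac{\abs{\h\mu(k)}^q}{\abs k^q}\Big)^{1/q},
\end{equation*}
using $\abs{\h\mu(0)} = 1$. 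Subadditivity of $t \mapsto t^{1/q}$, the symmetry $\abs{\h\mu(-k)} = \abs{\h\mu(k)}$, and $2^{1/p} \leq 2$ then bound this by $\frac{C}{n} + C\big(\sum_{k=1}^{n-1}\abs{\h\mu(k)}^q/k^q\big)^{1/q}$ with $C$ universal, which is \eqref{eq:ET-circle}; combined with the first step this also gives \eqref{eq:ET-interval}.

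I expect the passage from $\psi$ to $\psi^{\pm}$ to be the only genuinely non‑routine step, and it is the reason the estimate does not follow from Hausdorff--Young applied to $\mu * \psi$ directly. One cannot simply truncate the Fourier series of $\psi$ at frequency $n$: the partial sum suffers Gibbs oscillation near the jump, and the sharp tail $\sum_{\abs k \geq n}\h\psi(k)e^{2\pi i k x}$ has $L^p(\T)$-norm only of order $n^{-1/p}$, which for $p < \infty$ is far larger than the desired $n^{-1}$. Vaaler's polynomials evade this: they trap $\psi$ pointwise from both sides — an order relation that survives convolution against the probability measure $\mu$ — while differing in $L^1$ by only $\tfrac1n$, which forces their constant terms to be $O(n^{-1})$, and this is precisely what supplies the $C/n$ in \eqref{eq:ET-circle}. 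At the endpoint $p = \infty$, $q = 1$, the Hausdorff--Young step is just the triangle inequality in $\ell^1(\Z)$ and the argument degenerates to the classical \ET inequality, so Proposition~\ref{prop:ET} interpolates between the \ET ($p = \infty$) and diaphony‑type ($p = 2$) estimates.
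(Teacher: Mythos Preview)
Your argument is correct, and it takes a genuinely different route from the paper's.

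The paper proves an $L^p$ Ganelius lemma: for any bounded $V$ on $\T$,
\[
  \norm{V}_{L^p(\T)} \leq C\,\omega\!\left(\tfrac{C}{n};V\right) + C\Big(\sum_{k=0}^{n-1}|\h V(k)|^q\Big)^{1/q},
\]
where $\omega$ is the \emph{one-sided} modulus of continuity. The proof convolves $V$ with the Fej\'er kernel $\Psi_n$, estimates $\norm{\Psi_n * V}_{L^p}$ by Hausdorff--Young, and controls the error $V - \Psi_n * V$ using the concentration of $\Psi_n$ at scale $1/n$ together with the one-sided regularity encoded in $\omega$. One then applies this to the mean-zero antiderivative $F$ of $\lambda - \mu$ (your $-\mu*\psi$); positivity of $\mu$ enters only through the bound $\omega(\delta;F) \leq \delta$.

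You instead write $F = -\mu * \psi$ explicitly and replace the sawtooth $\psi$ by Vaaler's extremal polynomials $\psi^{\pm}$ of degree $\leq n-1$, which sandwich $\psi$ pointwise with $L^1$ gap $\leq 1/n$. Positivity of $\mu$ enters as the fact that convolution against a probability measure preserves the pointwise ordering, so the sandwich passes to $\mu * \psi$; then Hausdorff--Young is applied directly to the trigonometric polynomials $\mu * \psi^{\pm}$. This is the Selberg--Vaaler--Montgomery style of proving \ET inequalities, and it is arguably more transparent: the $C/n$ term comes visibly from the constant Fourier coefficient $\h{\psi^{\pm}}(0)$, and the role of positivity is isolated in a single line. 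The cost is that you invoke Vaaler's construction as a black box, whereas the paper's Fej\'er-kernel argument is self-contained. Both approaches leverage exactly the same two ingredients---positivity of $\mu$ and Hausdorff--Young---and both recover the classical \ET inequality at $p=\infty$.
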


\begin{remark}
  In \cite{Steinerberger}, Steinerberger introduced an \ET inequality for the \mbox{Wasserstein-$1$} distance.
  The $p = \infty$ case of \eqref{eq:ET-circle} shows that in fact the same bound holds for the Wasserstein-$\infty$ distance.
  As we shall argue in Section~\ref{sec:ET}, the $p = \infty$ case is equivalent to the classical \ET inequality.
\end{remark}

We prove Proposition~\ref{prop:W=S} in Section \ref{sec:W=S}, and use it to establish Theorem~\ref{thm:Wass-irreg} in Section~\ref{sec:irreg}.
We prove Proposition~\ref{prop:ET} in Section~\ref{sec:ET}, and apply it to the equidistribution of quadratic residues in finite fields.

\section*{Acknowledgements}

We warmly thank Stefan~Steinerberger and Andrea~Ottolini for their suggestions and encouragement.
This work was supported by the Fannie and John~Hertz Foundation and by NSF grant DGE-1656518.

\section{Wasserstein and Sobolev concur}
\label{sec:W=S}

We define the homogeneous negative Sobolev norm by duality.
For ${q \in [1, \infty]}$ and $X = I$ or $\T$, let
\begin{equation}
  \norm{f}_{\dot W^{1,q}(X)} \coloneqq \|f'\|_{L^q(X)}
\end{equation}
denote the homogeneous Sobolev seminorm of a measurable function
${f \colon X \to \R}$ with weak derivative $f'$.
For H\"older-conjugate $(p, q)$, we define
\begin{equation}
  \label{eq:neg-Sob}
  \norm{\mu - \lambda}_{\dot W^{-1,p}(X)} \coloneqq \sup\left\{\int_X f (\mu - \lambda) \mathrel{\Big|} \norm{f}_{\dot W^{1,q}(X)} \leq 1\right\}.
\end{equation}
Note that the seminorm $\norm{\anon}_{\dot W^{1, q}(X)}$ is invariant under constant shifts of $f$.
Such shifts leave the above expression unchanged, because $\mu - \lambda$ is orthogonal to the constant function.

We first relate our Sobolev norm to the $L^p$-discrepancy.
\begin{lemma}
  \label{lem:antideriv}
  Fix $p \in [1, \infty]$ and $\mu \in \m P(X)$.
  Let $F$ be an antiderivative of $\mu$.
  Then
  \begin{align}
    \label{eq:antideriv-interval}
    \norm{\mu - \lambda}_{\dot W^{-1, p}(I)} &= \norm{F - F(0)}_{L^p(I)}\\
    \intertext{and}
    \label{eq:antideriv-circle}
    \norm{\mu - \lambda}_{\dot W^{-1, p}(\T)} &= \inf_{y \in \R} \norm{F - y}_{L^p(\T)}.
  \end{align}
\end{lemma}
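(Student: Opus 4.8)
The plan is to unwind the definition of the negative Sobolev norm in \eqref{eq:neg-Sob} by integration by parts, moving the derivative off the test function and onto $F$. Fix a test function $f$ on $X$ with $\norm{f}_{\dot W^{1,q}(X)} = \norm{f'}_{L^q} \le 1$. Since $F' = \mu$ in the sense of distributions and $x$ is an antiderivative of $\lambda$, we have (weakly) $(\mu - \lambda) = (F - x)'$ on $I$, or $(\mu - \lambda) = (F - x)'$ on $\T$ after noting $F(x) - x$ descends to a function on $\T$ because $\mu$ and $\lambda$ are probability measures, so $F(1) - 1 = F(0)$. Integrating by parts,
\begin{equation*}
  \int_X f\,(\mu - \lambda) = -\int_X f'\,(F - c)
\end{equation*}
for any constant $c$, where the boundary term vanishes: on $\T$ there is no boundary, and on $I$ we should instead integrate by parts against $G \coloneqq F - x - (F(0))$, which vanishes at $0$, while at $1$ we get $G(1) = F(1) - 1 - F(0) = 0$ as well. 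So in both cases the boundary contributions drop.

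With that identity in hand, the supremum over $f$ becomes
\begin{equation*}
  \norm{\mu - \lambda}_{\dot W^{-1,p}(X)} = \sup\left\{ -\int_X f'\,(F - c) \,\Big|\, \norm{f'}_{L^q} \le 1\right\}.
\end{equation*}
On the circle, the map $f \mapsto f'$ sends $\dot W^{1,q}(\T)$ onto exactly the mean-zero functions $g \in L^q(\T)$ (one recovers $f$ by integrating $g$, and mean-zero is precisely the integrability/periodicity condition), so the supremum is $\sup\{ -\int_\T g\,(F-c) : g \in L^q, \int_\T g = 0, \norm{g}_{L^q}\le 1\}$. By $L^p$–$L^q$ duality restricted to the mean-zero subspace, this equals the $L^p$-distance from $F - c$ to the constants, i.e. $\inf_{y\in\R}\norm{F - c - y}_{L^p(\T)} = \inf_{y \in \R}\norm{F-y}_{L^p(\T)}$, giving \eqref{eq:antideriv-circle}. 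On the interval, the constraint is different: $f\mapsto f'$ maps $\dot W^{1,q}(I)$ onto all of $L^q(I)$ with no mean-zero restriction (any $g \in L^q$ has an antiderivative in $\dot W^{1,q}(I)$), so the supremum is the full $L^p$–$L^q$ duality pairing and equals $\norm{F - c}_{L^p(I)}$ with the optimal choice $c = F(0)$ forced by the boundary-term computation, yielding \eqref{eq:antideriv-interval}.

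The main obstacle is handling the endpoints $p = 1$ and $p = \infty$ cleanly, and more substantively justifying that the relevant image of $\dot W^{1,q}$ under differentiation is the claimed space (all of $L^q(I)$, versus mean-zero $L^q(\T)$) — this is where the interval/circle distinction enters and where one must be careful that the norm in \eqref{eq:neg-Sob} is a seminorm invariant under shifts of $f$, so only $f'$ matters. A minor point to check is that $F - F(0) \in L^p(I)$ and $\inf_y \norm{F-y}_{L^p(\T)}$ is finite, which follows since $F$ is monotone and bounded (as an antiderivative of a probability measure), hence in $L^\infty \subset L^p$. The duality step on $\T$ should be phrased as duality on the quotient $L^p(\T)/\mathbb{R}$ paired with the mean-zero subspace of $L^q(\T)$, which is standard but worth stating explicitly since it is exactly the mechanism producing the infimum over $y$.
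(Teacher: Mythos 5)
Your argument is correct and is essentially the paper's own proof: integrate by parts against the antiderivative of $\mu-\lambda$ that vanishes at the endpoints (your $G=F-x-F(0)$), then note that differentiation of test functions exhausts all of $L^q(I)$ on the interval but only the mean-zero subspace $L^q_0(\T)$ on the circle, so standard $L^p$--$L^q$ duality gives $\norm{G}_{L^p(I)}$ there and subspace--quotient duality gives the distance to constants, i.e.\ $\inf_{y}\norm{F-y}_{L^p(\T)}$, on $\T$. Your reading of $F$ (replacing it by $F-x$, the antiderivative of $\mu-\lambda$) is exactly the intended one, matching how the lemma is used later in the paper.
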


\begin{proof}
  On $I = [0, 1)$, we have
  \begin{equation}
    \int_I f(\mu - \lambda) = \int_I f \d F = - \int_I F \d f + Ff\Big|_0^1.
  \end{equation}
  Since $\mu - \lambda$ is mean zero, $F(0) = F(1)$ and
  \begin{equation}
    \int_I f(\mu - \lambda) = - \int_I [F - F(0)] f'.
  \end{equation}
  Taking the supremum over all $f' \in L^q(I)$, standard $L^p$-duality yields \eqref{eq:antideriv-interval}.

  On $\T$ there are no boundary terms, so $\int_I f(\mu - \lambda) = - \int_I F f'.$
  However, a function in $L^q(\T)$ is a derivative precisely when it has mean zero.
  We thus take the supremum over all $f' \in L_0^q(\T)$, the mean-zero subspace of $L^q(\T)$.
  By subspace-quotient duality,
  \begin{equation}
    \norm{\mu - \lambda}_{\dot W^{-1, p}(\T)} = \norm{F}_{L^p(\T)/\R 1}.
  \end{equation}
  Now \eqref{eq:antideriv-circle} follows from the definition of the quotient norm.
\end{proof}

In particular, Lemma~\ref{lem:antideriv} implies \eqref{eq:disc=Sob}.
We now turn to Wasserstein metrics.
Before proving Proposition~\ref{prop:W=S}, we note that the $p = 1$ case simply restates Kantorovich--Rubinstein duality for our measures \cite{KR}.
Indeed, $\norm{f}_{\dot W^{1,\infty}(X)} = \op{Lip}(f)$, so we can write \eqref{eq:W=S} as
\begin{equation}
  W_1^X(\mu, \lambda) = \sup\left\{\int_X f (\mu - \lambda) \mathrel{\Big|} \op{Lip}(f)\leq 1 \right\}.
\end{equation}

\begin{proof}[Proof of Proposition~\ref{prop:W=S}]
  We handle the interval first, so fix $\mu \in \m P(I)$ and ${p \in [1, \infty)}$.
  It is well-known that the optimal transport map from $\mu$ to $\lambda$ is \emph{monotone}, i.e. preserves the order of the mass \cite[\S 2.2]{Villani}.
  Let $F_\mu$ denote the unique left-continuous antiderivative of $\mu$ such that $F_\mu(0) = 0$, so that $F_\mu(x) = \mu([0,x))$ for $x \in I$.
  Let $F_\mu^{-1}$ denote the left-continuous pseudo-inverse of $F_\mu$:
  \begin{equation}
    F_\mu^{-1}(u) \coloneqq \inf \{x \in I \mid F_\mu(x) \geq u\}.
  \end{equation}
  Then for each $u \in (0, 1]$, the optimal transport plan from $\mu$ to $\lambda$ moves mass at position $F_\mu^{-1}(u)$ to position $u$.
  Thus
  \begin{equation}
    W_p^I(\mu, \lambda)^p = \int_I \abs{F_\mu^{-1}(u) - u}^p \ds u.
  \end{equation}
  We now appeal to a curious identity from single-variable calculus, which we state in more general terms in anticipation of the circle case.
  \begin{lemma}
    \label{lem:headturn}
    Let $h \colon \R \to \R$ be left-continuous and non-decreasing such that $h - \op{id}$ is $1$-periodic.
    Let $h^{-1}$ denote its left-continuous pseudo-inverse.
    Then for continuous $\phi \colon \R \to \R$,
    \begin{equation}
      \label{eq:headturn}
      \int_I \phi(h(x) - x) \d x = \int_I \phi(u - h^{-1}(u)) \d u.
    \end{equation}
  \end{lemma}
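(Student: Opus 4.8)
The identity \eqref{eq:headturn} says that the pushforwards of Lebesgue measure on $I$ under $x \mapsto h(x) - x$ and under $u \mapsto u - h^{-1}(u)$ coincide. I would establish it first when $h$ is a $C^1$ diffeomorphism of $\R$ with $h' > 0$, where it reduces to a one-line substitution. Set $g \coloneqq h - \op{id}$, so that $g$ is $1$-periodic and $h^{-1}(u+1) = h^{-1}(u) + 1$; consequently $u \mapsto \phi\bigl(u - h^{-1}(u)\bigr)$ is $1$-periodic, and its integral over $I$ equals its integral over $h(I) = [h(0), h(0)+1)$. Substituting $u = h(x)$ there, with $\d u = h'(x)\,\d x = \bigl(1 + g'(x)\bigr)\,\d x$ and $u - h^{-1}(u) = g(x)$, and using the pointwise identity $\phi(g)\,h' = \phi(g) + \bigl(\Phi \circ g\bigr)'$ for a $C^1$ antiderivative $\Phi$ of $\phi$, we get
\begin{equation*}
  \int_I \phi\bigl(u - h^{-1}(u)\bigr) \d u = \int_I \phi\bigl(g(x)\bigr) \d x + \int_I \bigl(\Phi \circ g\bigr)'(x) \d x = \int_I \phi\bigl(h(x) - x\bigr) \d x,
\end{equation*}
since $\int_I \bigl(\Phi \circ g\bigr)' = \Phi(g(1)) - \Phi(g(0)) = 0$ by the $1$-periodicity of $g$. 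This proves \eqref{eq:headturn} in the smooth case.

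For a general $h$ I would argue by approximation. Convolving $g$ with a symmetric mollifier $\rho_\varepsilon$ keeps $h_\varepsilon \coloneqq \op{id} + g \ast \rho_\varepsilon = h \ast \rho_\varepsilon$ (using $\op{id}\ast\rho_\varepsilon = \op{id}$) smooth and non-decreasing, with $h_\varepsilon - \op{id}$ still $1$-periodic; replacing $h_\varepsilon$ by $(h_\varepsilon + \varepsilon\,\op{id})/(1 + \varepsilon)$ makes it a $C^\infty$ diffeomorphism with positive derivative while merely rescaling the periodic part $h_\varepsilon - \op{id}$. As $\varepsilon \to 0$ one has $h_\varepsilon \to h$ at every continuity point of $h$, hence Lebesgue-a.e., and by the standard stability of generalized inverses under monotone convergence, $h_\varepsilon^{-1} \to h^{-1}$ a.e.\ as well. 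Since $h$ is non-decreasing with $h(x+1) = h(x) + 1$, the function $g$ takes values in an interval of length $2$, and $\norm{g \ast \rho_\varepsilon}_\infty \le \norm{g}_\infty$; thus all the integrands above remain in a fixed compact set, on which $\phi$ is bounded, so dominated convergence transfers \eqref{eq:headturn} from each $h_\varepsilon$ to $h$.

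The step I expect to be the real obstacle is precisely this passage to a general $h$, because the direct substitution genuinely breaks down: a jump of $h$ corresponds to an interval on which $h^{-1}$ is constant (and conversely), so $h \circ h^{-1} = \op{id}$ may fail on a set of positive measure, and the Stieltjes measure $\d h = \d x + \d g$ acquires atoms across which $\Phi \circ g$ jumps by $\Phi$ of the increment rather than by $\phi$ times it. Approximation circumvents this, but one should take some care verifying the almost-everywhere convergence of the inverses and the uniform bound that feeds dominated convergence. An alternative, more geometric route --- matching the ``head-turning'' picture --- is to reparametrize the completed graph $\Gamma \coloneqq \bigl\{(x, y) : h(x) \le y \le h(x^+)\bigr\}$ by $t = x + y$, writing $\Gamma = \bigl\{\bigl(a(t), b(t)\bigr)\bigr\}$ with $a, b$ continuous and non-decreasing, $a + b = \op{id}$, and $\bigl(a(t+2), b(t+2)\bigr) = \bigl(a(t), b(t)\bigr) + (1, 1)$; the two sides of \eqref{eq:headturn} then become $\int \phi(b - a) \d a$ and $\int \phi(b - a) \d b$ over one period, whose difference is $-\int \phi(v) \d v$ with $v \coloneqq b - a$ continuous and $2$-periodic, hence $0$.
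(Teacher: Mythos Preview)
Your argument is correct and follows essentially the same route as the paper: reduce to a $C^1$ diffeomorphism by approximation, then substitute $u = h(x)$ and observe that the difference of the two sides is $\int_I \phi(g)\,g' = \int_I (\Phi\circ g)'$, which vanishes by periodicity. You supply more detail on the approximation step than the paper (which simply asserts that suitable diffeomorphisms exist), and your alternative completed-graph parametrization is a pleasant extra not present there.
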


  \begin{proof}
    We can approximate $h$ and $h^{-1}$ pointwise almost-everywhere by $h_n$ and $h_n^{-1}$, respectively, for some sequence of $\m C^1$ diffeomorphisms $(h_n)$.
    We may thus assume that $h$ is itself a $\m C^1$ diffeomorphism.

    Let $\m I_1$ and $\m I_2$ denote the left- and right-hand sides of \eqref{eq:headturn}, respectively.
    With the change of variables $u = h(x)$, we have
    \begin{equation}
      \label{eq:I2-shifted}
      \m I_2 = \int_{h^{-1}(I)} \phi(h(x) - x) h'(x) \d x.
    \end{equation}
    Now $h - \op{id}$ is $1$-periodic, so $h^{-1}(I)$ is an interval of length 1.
    Since $h'$ is also $1$-periodic, we can shift the region of integration in \eqref{eq:I2-shifted} to $I$ without changing the value of the integral.
    If $\psi$ denotes an anti-derivative of $\phi$, we find
    \begin{equation}
      \m I_2 - \m I_1 = \int_I \phi(h(x) - x)[h'(x) - 1] \d x = \int_I \op{d}[\psi(h - \op{id})] = \psi(h - \op{id})\Big|_0^1 = 0.
    \end{equation}
    The final equality follows from $1$-periodicity.
  \end{proof}
  On the interval, we can restrict the domain of $h$ to $I$.
  Taking $h = F_\mu^{-1}$ and $\phi(z) = \abs{z}^p$, we find
  \begin{equation}
    W_p^I(\mu, \lambda)^p = \int_I \abs{F_\mu^{-1}(u) - u}^p \ds u = \int_I \abs{F_\mu(x) - x}^p \ds x.
  \end{equation}
  Now $F \coloneqq F_\mu - \op{id}$ is an antiderivative of $\mu - \lambda$ with $F(0) = 0$, so by Lemma~\ref{lem:antideriv},
  \begin{equation}
    W_p^I(\mu, \lambda) = \norm{F}_{L^p(I)} = \norm{\mu - \lambda}_{\dot W^{-1, p}(I)}.
  \end{equation}

  We next modify the argument to work on the torus.
  Given $\mu \in \m P(\T)$, we can lift $\mu$ and $\lambda$ to 1-periodic measures on the universal cover $\R$.
  These measures have infinite mass on $\R$, so we search for transport plans which are optimal with respect to any \emph{local} modification.
  We still expect the monotone transport plans to be locally optimal, but these are no longer unique.
  Indeed, there is ambiguity in where we begin ``filling-in'' the mass of $\mu$ with that of $\lambda$.
  That is, to transport $\lambda$ to $\mu$ we could first move mass at 0 to position $y$, and then fill-in monotonically around that starting point.
  We call this the $y$-monotonic transport plan.
  
  In \cite{DSS}, Delon, Salomon, and Sobolevski confirm that the locally optimal transport plans are precisely the $y$-monotonic plans.
  Furthermore, an optimal plan on $\T$ lifts to a locally optimal plan on $\R$.
  Thus an optimal transport plan on $\T$ is the restriction of a $y$-monotonic plan to $I$ for some $y\in \R$.
  
  To determine \emph{which} $y$ is optimal on $\T$, we compute the transport cost $C_p[y]$ in $I$ under the $y$-monotonic plan.
  With $F_\mu$ defined as before, let $F_\mu^y\coloneqq F_\mu(\,\cdot \, - y)$.
  Then the cost is
  \begin{equation}
    C_p[y]^p = \int_I \abs{(F_\mu^y)^{-1}(u) - u}^p \ds u
  \end{equation}
  and \cite{DSS} shows that
  \begin{equation}
    W_p^\T(\mu,\lambda) = \inf_{y\in \R} C_p[y].
  \end{equation}

  Applying Lemma~\ref{lem:headturn} with $h = F_\mu^y$ and $\phi(z) = \abs{z}^p$, we find
  \begin{equation}
    W_p^\T(\mu,\lambda)^p = \inf_{y\in \R} \int_I \abs{(F_\mu^y)^{-1}(u) - u}^p \ds u = \inf_{y\in \R} \int_I \abs{F_\mu^y(x) - x}^p \ds x.
  \end{equation}
  Shifting by $y$, periodicity and Lemma~\ref{lem:antideriv} imply
  \begin{equation}
    W_p^\T(\mu,\lambda) = \inf_{y\in \R} \norm{F - y}_{L^p(\T)} = \norm{\mu - \lambda}_{\dot W^{-1, p}(\T)}.
  \end{equation}
  
  We have thus verified \eqref{eq:W=S} on $I$ and $\T$ for all $p \in [1, \infty)$.
  Taking $p \to \infty$, we automatically obtain \eqref{eq:W=S} at the endpoint $p = \infty$.
  Nevertheless, we offer a self-contained proof in this case.
  
  Assume $\mu \neq \lambda$.
  For $A\subset X$ and $\eps>0$, let
  \begin{equation}
    A^\eps \coloneqq \{x\in X\;|\; \op{dist}(x,A) \leq \eps\}
  \end{equation}
  denote the $\eps$-neighborhood of $A$.
  As a consequence of Strassen's theorem \cite{GS, Strassen},
  \begin{equation}
    \label{eq:Strassen}
    W_\infty^X(\mu,\lambda) = \inf\left\{\eps \geq 0\;|\;\mu(A) \leq \lambda(A^\eps) \text{ for all Borel } A\subset X\right\}.
  \end{equation}
  Let $\eps_0 \coloneqq \norm{\mu - \lambda}_{\dot W^{-1,\infty}(X)} > 0$, and fix a Borel set $A\subset X$.
  Let
  \begin{equation}
    A^*\coloneqq \{x\in A^{\eps_0}\;|\;\op{dist}(x, \partial A^{\eps_0}) \geq \eps_0\}.
  \end{equation}
  Clearly, $A \subset A^*$.
  Also, $A^{\eps_0}$ and $A^*$ are unions of closed intervals, and each point in $\partial A^*$ corresponds to an $\eps_0$-wide buffer of $A^{\eps_0}$ around $A^*$.
  (Note that on $I$ we use the subspace topology, so $\{0,1\}$ are never boundary points of closed sets.)
  Thus
  \begin{equation}
    \mu(A^*) - \lambda(A^*) = \int_X \tbf{1}_{A^*} (\mu - \lambda) \leq \norm{\tbf{1}_{A^*}'}_{\mathrm{TV}}\norm{\mu - \lambda}_{\dot W^{-1,\infty}(X)} = \abs{\partial A^*} \eps_0
  \end{equation}
  and
  \begin{equation}
    \mu(A) \leq \mu(A^*) \leq \abs{\partial A^*} \eps_0 + \lambda(A^*) = \lambda(A^{\eps_0}\setminus A^*) + \lambda(A^*) = \lambda(A^{\eps_0}).
  \end{equation}
  By \eqref{eq:Strassen}, we have $W_\infty^X(\mu,\lambda)\leq \eps_0 = \norm{\mu - \lambda}_{\dot W^{-1,\infty}(X)}.$

  For the other direction, fix $\eps \in (0, \eps_0)$ and let $\delta \coloneqq \eps_0 - \eps$.
  First consider $X = \T$, and let $F$ denote a left-continuous anti-derivative of $\mu - \lambda$.
  By Lemma~\ref{lem:antideriv},
  \begin{equation}
    \norm{\mu - \lambda}_{\dot W^{-1,\infty}(\T)} = \inf_{y\in \R} \norm{F - y}_{L^\infty(\T)}.
  \end{equation}
  Evidently, this infimum is attained at $y = \frac{\sup F + \inf F}{2}$.
  Thus
  \begin{equation}
    \eps_0 = \norm{\mu - \lambda}_{\dot W^{-1,\infty}(\T)} = \frac{\sup F - \inf F}{2}.
  \end{equation}
  Take $x_\pm\in \T$ such that
  \begin{equation}
    F(x_+) > \sup F - \delta \quad \text{and}\quad F(x_-) < \inf F + \delta.
  \end{equation}
  After a rotation, we may assume that $0 \leq x_- \leq x_+ < 1$ in our representation $\T = \R/\Z$.
  Let $J = [x_-, x_+)$.
  Then
  \begin{equation}
    \mu(J) - \lambda(J) = F(x_+) - F(x_-) > \sup F - \inf F - 2\delta = 2\eps_0 - 2\delta = 2\eps.
  \end{equation}
  Hence
  \begin{equation}
    \mu(J) > \lambda(J) + 2\eps \geq \lambda(J^\eps).
  \end{equation}
  Since such an interval exists for all $\eps < \eps_0$, \eqref{eq:Strassen} implies
  \begin{equation}
    W_\infty^\T(\mu,\lambda) \geq \eps_0 = \norm{\mu - \lambda}_{\dot W^{-1,\infty}(\T)}.
  \end{equation}

  When $X = I$, we let $F$ denote the left-continuous antiderivative such that $F(0) = 0$, so that $\norm{\mu - \lambda}_{\dot W^{-1,\infty}(I)} = \norm{F}_{L^\infty(I)}.$
  If we take $J = [0, x_+)$ or $[x_-, 1)$, manipulations much as above yield
  \begin{equation}
    \mu(J) > \lambda(J) + \eps \geq \lambda(J^\eps),
  \end{equation}
  and the conclusion follows as before.
\end{proof}

\section{Irregularity of distribution in one dimension}
\label{sec:irreg}

We now pivot to the irregularity of distribution of sequences.
Since we have identified the $L^p$-discrepancy with the Sobolev and Wasserstein distances, Theorem~\ref{thm:Wass-irreg} largely follows from the classical results in Theorem~\ref{thm:disc-irreg}.
The principal remaining obstacle is the set of lower bounds in Theorem~\ref{thm:Wass-irreg} on the circle.
To prove these lower bounds, we follow the harmonic analysis approach of Roth and Hal\'asz \cite{Roth, Halasz}.
Bilyk lucidly presents this method in~\cite{Bilyk}, which we follow closely.

In his seminal paper \cite{Roth}, Roth observed a crucial equivalence between irregularity problems on $I$ and $I^2$.
Roth proved that point distributions on $I^2$ are irregular \emph{for all} $N \in \N$, and showed that this implies irregularity in one dimension for infinitely many $N$.
We adapt this approach to distributions on the circle.

Given a finite point set $\{\tbf x_n\}_{n=1}^N \subset \T \times I$, let $\mu$ denote its empirical measure as in \eqref{eq:empirical}.
We define the two-dimensional discrepancy function $\m D \colon \T \times I \to \R$ by
\begin{equation}
  \label{eq:disc-2D}
  \m D(x, y) \coloneqq \mu([0, x) \times [0, y)) - xy.
\end{equation}
By Proposition~\ref{prop:W=S} and Lemma~\ref{lem:antideriv}, we wish to control the one-dimensional discrepancy $D_N$ in the quotient space $L^p(\T)/\R1$.
We must therefore measure $\m D$ in a space that is compatible with these quotients on horizontal slices.
Let $V^p \coloneqq 1 \otimes L^p(I)$ denote the closed subspace of $L^p(\T \times I)$ consisting of functions that are independent of the first variable.
We will control $\m D$ in the quotient space $L^p(\T \times I)/V^p$.
The stratified structure of this space permits us to descend to the quotient $L^p(\T)/\R1$ on horizontal slices.
\begin{lemma}
  \label{lem:irreg-cyl}
  There exists a universal constant $C > 0$ such that for any ${N \in \N}$, $p \in [1, \infty]$, and point set $\{\tbf x_n\}_{n=1}^N \subset \T \times I$, the discrepancy function $\m D$ of the point set satisfies
  \begin{equation}
    \label{eq:irreg-cyl}
    \norm{\m D}_{L^p(\T \times I)/V^p} \geq C \frac{\log^{\al_p}N}{N}.
  \end{equation}
\end{lemma}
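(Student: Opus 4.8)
The plan is to transplant Roth's orthogonal function method and Halász's Riesz-product refinement—lucidly exposited in \cite{Bilyk}—from the square $I^2$ to the cylinder $\T \times I$. The one structural novelty is that every auxiliary test function must be orthogonal to $V^p$, i.e.\ have mean zero in the first variable, and it turns out this costs nothing. Fix $n \in \N$ with $2N < 2^n \leq 4N$, so $n \asymp \log N$. For each $\vec r = (r_1, r_2)$ with $r_1 + r_2 = n$ and $r_1 \geq 1$—there are $n$ such—partition $\T \times I$ into the $2^n$ dyadic boxes $R = R_1 \times R_2$ with $\abs{R_1} = 2^{-r_1}$ and $\abs{R_2} = 2^{-r_2}$, and let $h_R = h_{R_1} \otimes h_{R_2}$ be the associated $\pm 1$-valued Haar function. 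Define the $r$-function $f_{\vec r} \coloneqq \sum_R \eps_R h_R$, where $\eps_R = 0$ if $R$ contains one of the $\tbf x_n$, and otherwise $\eps_R \in \{\pm 1\}$ is chosen so that $\eps_R \langle \m D, h_R \rangle = \abs{\langle \m D, h_R \rangle}$. Three elementary facts drive the argument. First, if $R$ is empty then the Haar coefficient of $(x,y) \mapsto \mu([0,x) \times [0,y))$ against $h_R$ vanishes, so $\langle \m D, h_R \rangle = -\langle xy, h_R \rangle$, a fixed number of modulus $\asymp 2^{-2n}$ and definite sign; since at least $2^n - N > 2^n/2$ of the boxes are empty, $\langle \m D, f_{\vec r} \rangle \gtrsim 2^{-n} \gtrsim 1/N$. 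Second, because $r_1 \geq 1$, for each fixed $y$ the slice $x \mapsto f_{\vec r}(x,y)$ is a combination of mean-zero $x$-Haar functions at scale $2^{-r_1}$; as the scales $r_1$ are distinct across $\vec r$, every product $\prod_{\vec r \in S} f_{\vec r}$ with $S \neq \emptyset$ again has mean zero in $x$, hence lies in $(V^p)^\perp$. (This is where the cylinder is gentler than $I^2$: the only $r$-function landing in $V^p$ is the one with $r_1 = 0$, which we simply discard.) Third, the $h_R$, over all admissible $\vec r$ and $R$, are pairwise orthogonal with $\norm{h_R}_2^2 = 2^{-n}$.

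For $p = 2$, and hence for $p \in [2,\infty)$, put $F \coloneqq \sum_{\vec r} f_{\vec r}$. The first and third facts give $\langle \m D, F \rangle \gtrsim n/N$ and $\norm{F}_2^2 = \sum_{\vec r} \norm{f_{\vec r}}_2^2 \leq n$, while $F \in (V^2)^\perp$ by the second. So for any $v \in V^2$, Hölder yields $\norm{\m D - v}_2 \geq \langle \m D - v, F \rangle / \norm{F}_2 = \langle \m D, F \rangle / \norm{F}_2 \gtrsim \sqrt{n}/N$, whence $\norm{\m D}_{L^2(\T \times I)/V^2} \gtrsim \sqrt{\log N}/N$. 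For $p \in [2,\infty)$ we have $V^p \subseteq V^2$ and $\norm{\anon}_p \geq \norm{\anon}_2$ on the probability space $\T \times I$, so $\norm{\m D}_{L^p/V^p} \geq \norm{\m D}_{L^2/V^2}$; since $\al_p = \frac12$, this settles the range.

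For $p = \infty$ and $p = 1$ we replace $F$ by a Riesz product of the $f_{\vec r}$, exactly as on $I^2$. When $p = \infty$, take $F \coloneqq \prod_{\vec r}(1 + f_{\vec r}) - 1$; since $1 + f_{\vec r} \geq 0$ and the multilinear terms $\prod_{\vec r \in S} f_{\vec r}$ integrate to zero up to controllable coincidences, $\norm{F}_1 \leq C$, while the degree-one part of $\langle \m D, F \rangle$ is $\gtrsim n/N$ and the higher-degree part is shown not to swallow it. When $p = 1$, take $F \coloneqq \operatorname{Im} \prod_{\vec r}(1 + i\gamma f_{\vec r})$ with $\gamma \asymp n^{-1/2}$, so $\norm{F}_\infty \leq \prod (1 + \gamma^2)^{1/2} \leq e^{C}$ and the degree-one term contributes $\gtrsim \gamma n / N \asymp \sqrt{n}/N$. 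In both cases $F \in (V^1)^\perp$ by the second fact, so dividing by $\norm{F}_1$ or $\norm{F}_\infty$ gives $\norm{\m D}_{L^\infty/V^\infty} \gtrsim \log N/N$ and $\norm{\m D}_{L^1/V^1} \gtrsim \sqrt{\log N}/N$. The remaining ranges follow as before from $V^p \subseteq V^1$ and $\norm{\anon}_p \geq \norm{\anon}_1$, using $\al_\infty = 1$ and $\al_p = \frac12$.

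I expect the main obstacle to be the familiar one in this circle of ideas: estimating the higher-order terms $\langle \m D, \prod_{\vec r \in S} f_{\vec r} \rangle$ and $\int \prod_{\vec r \in S} f_{\vec r}$ in the Riesz-product arguments so as to absorb them into the degree-one main term. These are precisely Halász's coincidence estimates on the square \cite{Halasz, Bilyk}; since the cylinder creates no new incidences between dyadic boxes—and since integrating out the circle variable annihilates many products outright—I would import those estimates with only cosmetic changes.
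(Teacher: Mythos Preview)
Your approach is essentially the paper's: build the Roth/Hal\'asz test functions as in \cite{Bilyk}, observe they are mean-zero on every horizontal slice, and conclude that duality descends to the quotient $L^p/V^p$. Two small remarks. First, your parenthetical about discarding $r_1 = 0$ is unnecessary: in the standard convention the Haar function $h_{R_1}$ on the single level-$0$ interval is already $+1$ on $[0,\tfrac12)$ and $-1$ on $[\tfrac12,1)$, hence mean-zero in $x$, so nothing needs to be thrown away (the paper simply asserts that all the Haar wavelets comprising $F$ are mean-zero on horizontal slices). Second, for intermediate $p$ you use the elementary observation $\norm{\m D}_{L^p/V^p} \geq \norm{\m D}_{L^2/V^2}$ (resp.\ $\geq \norm{\m D}_{L^1/V^1}$) via $L^p$-monotonicity on a probability space together with $V^p \subset V^2$ (resp.\ $V^1$); the paper instead cites the Littlewood--Paley bound $\norm{F}_{L^q} \asymp \sqrt{\log N}$ from \cite{Bilyk}. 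Your shortcut is cleaner for the purpose at hand and avoids invoking Littlewood--Paley altogether.
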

Before proving the lemma, we use it to establish our main theorem.
\begin{proof}[Proof of Theorem~\ref{thm:Wass-irreg}.]
  The identities \eqref{eq:disc=Sob} and \eqref{eq:W=S} imply
  \begin{equation}
    \norm{D_N}_{L^p(I)} = W_p^I(\mu_N, \lambda).
  \end{equation}
  Thus on the interval, Theorem~\ref{thm:Wass-irreg} simply restates the classical results in Theorem~\ref{thm:disc-irreg}.
  Furthermore, any transport plan on $I$ can be interpreted as a plan on $\T$, so $W_p^\T \leq W_p^I$.
  It follows that the upper bounds in Theorem~\ref{thm:disc-irreg} on $I$ also serve as upper bounds in Theorem~\ref{thm:Wass-irreg} on $\T$.
  See also \cite{Steinerberger} for an upper bound proven specifically for the Wasserstein-$2$ distance on $\T$.
  It remains only to prove the lower bounds on $\T$.

  Fix $p \in [1, \infty]$ and a sequence $(x_n)_{n \in \N} \subset \T$.
  For each $m \in \N$, we form an $m$-point set $\{\tbf x_n^m\}_{n=1}^m \subset \T \times I$ by
  \begin{equation}
    \tbf x_n^m \coloneqq \left(x_n, \frac{n-1}{m}\right).
  \end{equation}
  That is, we embed the first $m$ terms of $(x_n)$ in the cylinder by interpreting the ``time'' $\frac{n-1}{m}$ as a second coordinate.
  Let $\m D^m$ denote the two-dimensional discrepancy function associated to this $m$-point set by \eqref{eq:disc-2D}.
  By Lemma~\ref{lem:irreg-cyl},
  \begin{equation}
    \norm{\m D^m}_{L^p(\T \times I)/V^p} \geq C \frac{\log^{\al_p} m}{m}.
  \end{equation}
  That is,
  \begin{equation}
    \inf_{g \in L^p(I)} \norm{\m D^m - 1 \otimes g}_{L^p(\T \times I)} \geq  C\frac{\log^{\al_p} m}{m}.
  \end{equation}
  By Fubini, there exists $t_m \in I$ such that
  \begin{equation}
    \inf_{y \in \R} \norm{\m D^m(\anon, t_m) - y}_{L^p(\T)} \geq C \frac{\log^{\al_p} m}{m}.
  \end{equation}
  Now $t_m$ is nearly the time corresponding to the index $N_m \coloneqq \ceil{m t_m}$.
  Writing $t_m = \frac{N_m}{m} - \delta$ with $\delta \in \left[0, \frac 1 m\right)$, we have
  \begin{equation}
    \m D^m(x, t_m) = \frac{N_m}{m} D_{N_m}(x) - \delta x \quad \textrm{for all } x \in \T,
  \end{equation}
  where $D_{N_m}$ is the one-dimensional discrepancy of $(x_n)$ at stage $N_m$.
  Therefore
  \begin{align}
    \norm{D_{N_m}}_{L^p(\T)/\R 1} &\geq \frac{m}{N_m} \left(\norm{\m D^m(\anon, t_m)}_{L^p(\T)/\R 1} - \delta \norm{\id{}}_{L^p(\T)}\right)\\
                                  &\geq \frac{C}{N_m}\left(\log^{\al_p}m - C'\right).
  \end{align}
  Once $m$ is sufficiently large, we find
  \begin{equation}
    \label{eq:final-bd}
    \norm{D_{N_m}}_{L^p(\T)/\R 1} \geq C\frac{\log^{\al_p} m}{N_m} \geq C\frac{\log^{\al_p} N_m}{N_m},
  \end{equation}
  where we have allowed the constant $C > 0$ to change from line to line.
  The first inequality in \eqref{eq:final-bd} cannot hold as $m \to \infty$ if the sequence $(N_m)_{m \in \N}$ remains bounded, so it must hold for infinitely many $N_m$.
\end{proof}

\begin{remark}
  In fact, the $p = 2$ case of Theorem~\ref{thm:Wass-irreg} follows from classical results.
  Indeed, in this case Proposition~\ref{prop:W=S} and Parseval imply
  \begin{equation}
    \label{eq:Parseval}
    W_2^\T(\mu_N, \lambda) = \norm{\mu_N - \lambda}_{\dot H^{-1}(\T)} = \frac 1 {2\pi} \left(\sum_{k \in \Z \setminus \{0\}} \frac{\abs{\h \mu_N(k)}^2}{k^2}\right)^{\frac 1 2}.
  \end{equation}
  The final expression is the diaphony of Zinterhof (up to a factor of $2\pi$).
  In \cite{Proinov}, Pro\u{\nakedi}nov announced that the diaphony exceeds $C\frac{\sqrt{\log N}}{N}$ infinitely often, confirming Theorem~\ref{thm:disc-irreg} in the case $p = 2$.
  (We were, however, unable to locate a published proof.)
  
  Additionally, if $F$ denotes the antiderivative of $\mu - \lambda$ with $F(0) = 0$ and $\smallbar F \coloneqq \int_I F$, Proposition~\ref{prop:W=S} and Lemma~\ref{lem:antideriv} imply
  \begin{equation}
    W_2^I(\mu_N, \lambda)^2 = W_2^\T(\mu_N, \lambda)^2 + {\smallbar F}^2.
  \end{equation}
  This is simply a reinterpretation of a formula of Koksma \cite{K1, K2}.
  It precisely quantifies the additional transport cost incurred when the movement of mass across the point $0 \in \T$ is forbidden.
\end{remark}

\begin{proof}[Proof of Lemma~\ref{lem:irreg-cyl}]
  Fix $N \in \N$, the point set ${\{\tbf x_n\}_{n=1}^N \subset \T \times I}$, and its discrepancy function $\m D$.
  In \cite{Bilyk}, Bilyk condenses ideas of Roth and Hal\'asz to prove
  \begin{equation}
    \label{eq:2D-baby}
    \norm{\m D}_{L^p(\T \times I)} \geq C \frac{\log^{\al(p)}N}{N}
  \end{equation}
  for any $p \in [1, \infty]$.
  (Note that our discrepancy function differs from that in \cite{Bilyk} by a factor of $N^{-1}$.)
  This suffices for irregularity on the interval.
  On the torus, however, we must measure $\m D$ in the smaller quotient norm of $L^p/V^p$.
  We claim that Bilyk actually proves the stronger bound \eqref{eq:irreg-cyl}.
  
  When $p = 2$, Bilyk follows Roth and proves \eqref{eq:2D-baby} via duality \cite[\S 2]{Bilyk}.
  He uses Haar wavelets to construct an explicit test function $F$ on $\T \times I$ such that
  \begin{equation}
    \norm{F}_{L^2(\T \times I)} \asymp \sqrt{\log N} \And \braket{\m D, F}_{L^2(\T \times I)} \gtrsim \frac{\log N}{N}.
  \end{equation}
  The estimate \eqref{eq:2D-baby} then follows from Cauchy-Schwarz.
  To control $\m D$ in $L^2/V^2$, we must handle coset representatives of the form $\m D - 1 \otimes g$ for $g \in L^2(I)$.
  Fortuitously, the Haar wavelets comprising $F$ are mean-zero \emph{along every horizontal slice}.
  It follows that
  \begin{equation}
    \braket{F, 1 \otimes g} = 0
  \end{equation}
  for all $g \in L^p(I)$.
  Therefore
  \begin{equation}
    \norm{\m D - 1 \otimes g}_{L^2} \geq \frac{\braket{\m D - 1 \otimes g, F}}{\norm{F}_2} = \frac{\braket{\m D, F}}{\norm{F}_2} \gtrsim \frac{\sqrt{\log N}}{N}.
  \end{equation}
  Since this holds uniformly for $g \in L^2(I)$, we obtain
  \begin{equation}
    \norm{\m D}_{L^2/V^2} \gtrsim \frac{\sqrt{\log N}}{N},
  \end{equation}
  i.e. the $p = 2$ case of Lemma~\ref{lem:irreg-cyl}.

  When $p \in (1, \infty)$, Bilyk employs Littlewood--Paley theory \cite[\S 3]{Bilyk} to show that $\norm{F}_q \asymp \sqrt{\log N}$, where $q$ denotes the H\"{o}lder exponent dual to $p$.
  We thus obtain Lemma~\ref{lem:irreg-cyl} for $p \in (1, \infty)$ in an identical manner.
  When $p = 1$ or $\infty$, Hal\'{a}sz had the insight of using ``Riesz products'' as test functions \cite{Halasz}.
  The construction is slightly more elaborate, but in \cite[\S 4]{Bilyk} we nonetheless arrive at test functions in the span of the Haar basis which are mean-zero on horizontal slices.
  These test functions are thus also immune to the quotient space complication, and imply the $p = 1$ and $\infty$ cases of Lemma~\ref{lem:irreg-cyl}.
\end{proof}

\section{An $L^p$ \ET inequality}
\label{sec:ET}

The classical \ET inequality \cite{ET1, ET2} controls another notion of discrepancy on the circle:
\begin{equation}
  \op{disc}(\mu, \lambda)\coloneqq \sup_{J\subset \T}\abs{\mu(J) - \lambda(J)},
\end{equation}
where the supremum ranges over all subintervals $J \subset \T$.
It states:
\begin{equation}
  \label{eq:ET}
  \op{disc}(\mu, \lambda) \leq \frac{C}{n} + C \sum_{k=1}^{n-1}\frac{\abs{\h \mu(k)}}{k} \quad \text{for all }n\in \N.
\end{equation}
In fact, this form of discrepancy is cleanly related to those we've already considered:
\begin{equation}
  \label{eq:ET-disc-Sob}
  \op{disc}(\mu, \lambda) = 2 \norm{\mu - \lambda}_{\dot W^{-1,\infty}(\T)} = 2 W_\infty^\T(\mu, \lambda).
\end{equation}
After all, if $F$ denotes a left-continuous antiderivative of $\mu - \lambda$, the regularity of $\mu$ implies
\begin{equation}
  \sup_{J\subset \T}\abs{\mu(J) - \lambda(J)} = \sup F - \inf F = 2 \norm{\mu - \lambda}_{\dot W^{-1,\infty}(\T)}.
\end{equation}
By \eqref{eq:ET-disc-Sob}, the $p = \infty$ case of Proposition~\ref{prop:ET} is equivalent to \eqref{eq:ET}.

To prove Proposition~\ref{prop:ET}, we introduce a lemma in the mode of Ganelius \cite{Ganelius}.
For a real function $V\in L^\infty(\T)$, define the one-sided modulus of continuity
\begin{equation}
  \omega(\delta;V)\coloneqq \sup_{s\in [t,t+\delta]}[V(s) - V(t)].
\end{equation}

\begin{lemma}
  \label{lem:Ganelius}
  There exists a universal $C>0$ such that for all $p\in [2,\infty]$, $n\in \N$, and $V \in L^\infty(\T)$,
  \begin{equation}
    \norm{V}_{L^p(\T)} \leq C \, \omega\left(\frac C n ; V\right) + C\left(\sum_{k=0}^{n-1} |\h V(k)|^q\right)^{\frac 1 q},
  \end{equation}
  where $q$ is the H\"older-conjugate of $p$.
\end{lemma}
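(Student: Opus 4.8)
The plan is to sandwich $V$ pointwise between two one-sided smoothings built from a band-limited kernel, and then to estimate those smoothings in $L^p$ via the Hausdorff--Young inequality. First I would fix a non-negative trigonometric polynomial $\chi$ on $\T$ of degree $<n$ with $\int_\T\chi = 1$, first absolute moment $\int_\T\op{dist}(s,\Z)\,\chi(s)\,\d s = O(1/n)$, and most of its mass to the right of the origin: writing $\chi = \chi_+ + \chi_-$ for the parts supported on $[0,\tfrac12)$ and $[-\tfrac12,0)$, I want the tail mass $\eps\coloneqq\norm{\chi_-}_{L^1} < \tfrac14$. The first three properties come from a Jackson-type kernel of degree $<n$ (a normalized square of a Fej\'er kernel), which decays like $\min\bigl(n,(n^3\op{dist}(s,\Z)^4)^{-1}\bigr)$; translating it to the right by a fixed large multiple of $1/n$ moves all but an $O(\eps)$-fraction of its mass onto $[0,\tfrac12)$, securing the last. (For $n$ below an absolute threshold no such concentrated kernel exists, but there one uses $\norm{V}_{L^p}\le\norm{V-\h V(0)}_{L^\infty}+\abs{\h V(0)}\le\omega(1;V)+\abs{\h V(0)}$ together with $\omega(1;V)=\sup V-\inf V\le\omega(C/n;V)$, valid once $C$ exceeds the threshold.)

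Now the core step: a pointwise sandwich using only the \emph{one-sided} modulus. For $s\ge0$ the definition of $\omega$ gives $V(t-s)\ge V(t)-\omega(s;V)$ and $V(t+s)\le V(t)+\omega(s;V)$, and since $\omega(\cdot;V)$ is non-decreasing and subadditive, $\omega(s;V)\le(sn+1)\,\omega(1/n;V)$. Integrating these against $\chi_+$ (the second after reflecting $s\mapsto-s$) and using $\norm{\chi_+}_{L^1}=1-\eps$ and the moment bound yields, with a universal $C'$,
\begin{equation*}
  (1-\eps)\,V(t)\le(V*\chi_+)(t)+C'\,\omega(1/n;V)\qquad\text{and}\qquad(1-\eps)\,V(t)\ge(V*\check\chi_+)(t)-C'\,\omega(1/n;V),
\end{equation*}
where $\check\chi_+(u)\coloneqq\chi_+(-u)$, hence $(1-\eps)\abs{V}\le\abs{V*\chi_+}+\abs{V*\check\chi_+}+C'\,\omega(1/n;V)$ pointwise. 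The one-sided modulus suffices here because $V$ is bounded above through its values to the left and $-V$ above through the values of $V$ to the right. Taking $L^p$ norms, writing $\chi_+=\chi-\chi_-$ and $\check\chi_+=\check\chi-\check\chi_-$ (with $\check\chi(u)\coloneqq\chi(-u)$), and absorbing the tails by Young's inequality ($\norm{V*\chi_-}_{L^p},\norm{V*\check\chi_-}_{L^p}\le\eps\norm{V}_{L^p}$), it remains to bound $\norm{V*\chi}_{L^p}$ and $\norm{V*\check\chi}_{L^p}$. These are trigonometric polynomials of degree $<n$ with Fourier coefficients bounded in modulus by $\norm{\chi}_{L^1}=1$, so Hausdorff--Young $\norm{g}_{L^p}\le\norm{\h g}_{\ell^q}$ — which holds precisely because $p\ge2$ — bounds each by $2\bigl(\sum_{k=0}^{n-1}\abs{\h V(k)}^q\bigr)^{1/q}$ (using that $V$ is real to fold in the negative frequencies). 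Altogether $(1-3\eps)\norm{V}_{L^p}\le4\bigl(\sum_{k=0}^{n-1}\abs{\h V(k)}^q\bigr)^{1/q}+C'\,\omega(1/n;V)$; since $\eps<\tfrac14$ the $\norm{V}_{L^p}$ term absorbs, and $\omega(1/n;V)\le\omega(C/n;V)$ for $C\ge1$, which finishes the proof.

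I expect the kernel construction to be the main obstacle. Band-limited kernels cannot be compactly supported, yet it is precisely the one-sidedness of $\chi$ that lets the one-sided modulus control $V-V*\chi$: the classical two-sided modulus of continuity would fail here, since a monotone sawtooth has one-sided modulus $O(1/n)$ but sits at a fixed $L^p$-distance from every trigonometric polynomial of degree $<n$. Translating a rapidly decaying positive kernel reconciles band-limitedness with one-sidedness at the cost of the small tail $\eps$, which the absorption step then removes.
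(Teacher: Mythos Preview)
Your argument is correct and complete. It is, however, genuinely different from the paper's route, which follows Ganelius more literally. The paper keeps the \emph{symmetric} Fej\'er kernel $\Psi_n$ and never translates it; instead it splits $\T$ into the level sets $\Omega^\pm=\{\pm V>\omega\}$ and exploits the one-sided modulus through the observation that the \emph{shifted} sets $\Omega^+-\delta$ and $\Omega^-+\delta$ are disjoint, which lets one bound $\|\Psi_n*V\|_{L^p}^p$ from below by $2^{1-3p}\|V\|_{L^p}^p-2^{2-2p}\omega^p$ and then rearrange. You, by contrast, bake the one-sidedness into the kernel itself by translating a Jackson kernel to the right of the origin, which yields a direct pointwise sandwich $(1-\eps)|V|\le|V*\chi_+|+|V*\check\chi_+|+C'\omega$ without any level-set decomposition. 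Your approach is arguably more transparent about \emph{why} a one-sided modulus suffices, and your remark about the sawtooth is a nice diagnostic. The price you pay is needing the sharper Jackson-type decay to secure the first-moment bound $\int s\,\chi(s)\,\d s=O(1/n)$ (Fej\'er would lose a logarithm here), whereas the paper's shifted-level-set device only needs tail mass control, for which Fej\'er is enough. Both arguments finish identically via Hausdorff--Young.
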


Proposition~\ref{prop:ET} follows easily from Lemma~\ref{lem:Ganelius}:
\begin{proof}[Proof of Proposition~\ref{prop:ET}.]
  Let $F$ denote the mean-zero antiderivative of $\lambda - \mu$ (opposite our earlier choice).
  By Lemma~\ref{lem:antideriv}, $\norm{\mu - \lambda}_{\dot{W}^{-1,p}(\T)} \leq \norm{F}_{L^p(\T)}.$
  Now note that
  \begin{equation}
    \omega(\delta; F) = \sup_{s\in [t,t+\delta]}\left[(s-t) - \mu([t,s))\right] \leq \delta.
  \end{equation}
  In particular, when $\delta = \frac C n$, Lemma~\ref{lem:Ganelius} implies
  \begin{equation}
    \label{eq:pre-ET}
    \norm{F}_{L^p(\T)} \leq \frac C n + C\left(\sum_{k=0}^{n-1}|\h F(k)|^q\right)^{\frac 1 q},
  \end{equation}
  where we have allowed $C$ to change from line to line.
  Now $\h F(0) = 0$ because $F$ is mean-zero, and $\h F(k) = \frac{\h \mu(k)}{2\pi \i k}$ for $k \geq 1$.
  Thus \eqref{eq:pre-ET} implies \eqref{eq:ET-circle}.

  Now consider the interval.
  The discrepancy function ${D(x) \coloneqq \mu([0,x)) - x}$ is a left-continuous antiderivative of $\mu - \lambda$ with $D(0) = 0$.
  By Lemma~\ref{lem:antideriv}, $\norm{\mu - \lambda}_{\dot{W}^{-1,p}(I)} = \norm{D}_{L^p(I)}$.
  Now $\h D(0) = \int_I D \eqqcolon \smallbar D$.
  Using $D$ in the place of $F$ in \eqref{eq:pre-ET} and retaining the $k = 0$ term in the sum, we obtain \eqref{eq:ET-interval}.
\end{proof}

To prove the lemma, we follow the approach of Ganelius \cite{Ganelius}.
\begin{proof}[Proof of Lemma \ref{lem:Ganelius}.]
  Fix $n \in \N$ and $V \in L^\infty(\T)$.
  We first assume $p\in [2,\infty)$.
  Let
  \begin{equation}
    \Psi_n(t) \coloneqq \frac{\sin^2(\pi nt)}{n\sin^2(\pi t)}
  \end{equation}
  denote the Fej\'er kernel.
  Then $\Psi_n$ has mass $1$ concentrated at scale $\frac 1 n$ around $t = 0$.
  Precisely, there exists $C_0>0$ such that for all $\delta\in (0, 1)$,
  \begin{equation}
    \int_{-\delta}^\delta \Psi_n(t) \d t \geq 1 - \frac{C_0}{n\delta}.
  \end{equation}
  Fix $\delta \coloneqq \frac{8C_0}{n},$ so that
  \begin{equation}
    \int_{\T\setminus [-\delta,\delta]}\Psi_n \leq \frac 1 {8}.
  \end{equation}
  Let $\omega\coloneqq \omega(2\delta; V)$.
  Define the subsets
  \begin{equation}
    \Omega^\pm \coloneqq \{t\in \T\;|\; \pm V(t) > \omega\}.
  \end{equation}
  By the definition of $\omega$, the translated sets $\Omega^+ - \delta$ and $\Omega^- + \delta$ are disjoint.

  Next, let $\Psi_n^1\coloneqq \tbf{1}_{[-\delta,\delta]}\Psi_n$ and $\Psi_n^2\coloneqq \Psi_n - \Psi_n^1$.
  By the generalized AM-GM inequality, $\abs{a + b}^p \leq 2^{p-1}(\abs{a}^p + \abs{b}^p)$ for all $a,b\in \R$.
  After substitution and rearrangement, this implies
  \begin{equation}
    \label{eq:arithmetic}
    \abs{a - b}^p \geq 2^{1 - p}\abs{a}^p - \abs{b}^p\quad\text{for all }a,b \in \R.
  \end{equation}
  In particular,
  \begin{equation}
    \label{eq:split}
    \int_\T \abs{\Psi_n\ast V}^p \geq 2^{1-p}\int_\T \abs{\Psi_n^1\ast V}^p - \int_\T \abs{\Psi_n^2\ast V}^p.
  \end{equation}
  We control the second term with Young's inequality:
  \begin{equation}
    \norm{\Psi_n^2\ast V}_{L^p}^p \leq \norm{\Psi_n^2}_{L^1}^p\norm{V}_{L^p}^p \leq 2^{-3p}\norm{V}_{L^p}^p.
  \end{equation}
  For the first term in \eqref{eq:split}, write
  \begin{equation}
    \int_\T \abs{\Psi_n^1\ast V}^p \geq \int_{\Omega^+ - \delta} \abs{\Psi_n^1\ast V}^p + \int_{\Omega^- + \delta} \abs{\Psi_n^1\ast V}^p.
  \end{equation}
  Now if $t\in \Omega^+$, the definition of $\omega$ implies
  \begin{equation}
    \Psi_n^1\ast V(t - \delta) = \int_{-\delta}^\delta \Psi_n(s)V(t - \delta - s) \d s \geq [V(t) - \omega]\int_{-\delta}^\delta \Psi_n(s) \d s \geq \frac 1 2 [V(t) - \omega].
  \end{equation}
  Similarly, if $t\in \Omega^-$, we have
  \begin{equation}
    \abs{\Psi_n^1\ast V(t + \delta)} \geq \frac 1 2 [\abs{V(t)} - \omega].
  \end{equation}
  Thus by \eqref{eq:arithmetic} again,
  \begin{equation}
    \int_{\Omega^\pm \mp \delta} \abs{\Psi_n^1\ast V}^p \geq 2^{-p}\int_{\Omega^\pm}(\abs V - \omega)^p \geq 2^{-p}\int_{\Omega^\pm} \left(2^{1-p}\abs{V}^p - \omega^p\right).
  \end{equation}
  On the other hand, $\abs{V}\leq \omega$ on $\T \setminus (\Omega^+ \cup \Omega^-)$, so
  \begin{equation}
    \int_{\T \setminus (\Omega^+ \cup \Omega^-)} \abs{V}^p \leq \omega^p.
  \end{equation}
  Collecting these results, \eqref{eq:split} implies
  \begin{equation}
    \int_\T \abs{\Psi_n\ast V}^p \geq 2^{2-3p}\int_\T \abs{V}^p - 2^{2-2p}\omega^p - 2^{-3p} \int_\T \abs{V}^p \geq 2^{1-3p}\int_\T \abs{V}^p - 2^{2-2p}\omega^p.
  \end{equation}
  Rearranging, we find
  \begin{equation}
    \int_\T \abs{V}^p \leq 2^{2p}\omega^p + 2^{3p}\int_\T \abs{\Psi_n\ast V}^p.
  \end{equation}
  Taking $\frac 1 p$ powers, this becomes
  \begin{equation}
    \label{eq:F_est}
    \norm{V}_{L^p} \leq 4\omega + 8 \norm{\Psi_n\ast V}_{L^p}.
  \end{equation}
  The same holds for $p = \infty$ by taking limits, or by trivially adjusting the above manipulations.
  In fact, \eqref{eq:F_est} holds for all $p\geq 1$.

  Now recall that
  \begin{equation}
    \h \Psi_n(k) = \left(1 - \frac{\abs{k}}{n}\right)\tbf{1}_{[-n,n]}(k) \leq \tbf{1}_{[1-n, \, n-1]}(k)
  \end{equation}
  When $p\geq 2$, the Hausdorff--Young inequality implies
  \begin{equation}
    \norm{\Psi_n\ast V}_{L^p} \leq \left(\sum_{k=1-n}^{n-1} |\h V(k)|^q\right)^{\frac 1 q}.
  \end{equation}
  Since $V$ is real, we obtain
  \begin{equation}
    \norm{V}_{L^p} \leq C\omega\left(\frac{C}{n};V\right) + C\left(\sum_{k=0}^{n-1}|\h V(k)|^q\right)^{\frac 1 q},
  \end{equation}
  perhaps after increasing $C$.
\end{proof}

\subsection{An application in number theory}

As an example, we use Propositions~\ref{prop:W=S} and \ref{prop:ET} to study the distribution of quadratic residues in finite fields of prime order.
Given a prime $p$, we consider the set
\begin{equation}
  Q_p \coloneqq \left\{ \left\{\frac{m^2}{p}\right\} \mathrel{\Big |} 1 \leq m \leq p\right\} \subset \T,
\end{equation}
where $\{\anon\}$ denotes the fractional part.
We measure the equidistribution of $Q_p$ in Wasserstein-$r$ distances.
We mimic the approach of Steinerberger~\cite{Steinerberger}, who considered $r \in \{2, \infty\}$.
We extend his results to the intermediate exponents $r \in (2, \infty)$.

Let $\mu_p$ denote the empirical measure of $Q_p$ as in \eqref{eq:empirical}.
To apply Proposition~\ref{prop:ET}, we must control the Gauss sum
\begin{equation}
  \h \mu_p(k) = \frac 1 p \sum_{m=1}^p \e^{-2\pi \i k m^2/p}.
\end{equation}
This is a classical object in analytic number theory.
Gauss showed:
\begin{equation}
  \label{eq:Gauss}
  \abs{\h \mu_p(k)} =
  \begin{cases}
    1 & \textrm{if } p \,|\, k, \\
    \frac 1 {\sqrt{p}} & \textrm{if } p \nmid k.
  \end{cases}
\end{equation}

Now let $(r, s)$ be H\"older-conjugates with $r \in [2, \infty]$.
By Proposition~\ref{prop:ET} with $n = p$, we have
\begin{equation}
  W_r^\T(\mu_p, \lambda) \leq \frac{C}{p} + C \left(\sum_{k=1}^{p-1} \frac{\abs{\h \mu_p(k)}^s}{k^s}\right)^{\frac 1 s} \leq \frac{C}{p} + \frac{C}{\sqrt{p}}\left(\sum_{k=1}^{p-1} k^{-s}\right)^{\frac 1 s}.
\end{equation}
Thus when $r \in [2, \infty)$, there exists a constant $C_r > 0$ such that
\begin{equation}
  \label{eq:residue-finite-r}
  W_r^\T(\mu_p, \lambda) \leq \frac{C_r}{\sqrt{p}}.
\end{equation}
In fact, this holds for all $r \in [1, \infty)$ by H\"older's inequality.
However, when $r = \infty$ we lose a logarithmic factor:
\begin{equation}
  \label{eq:residue-infinity}
  W_\infty^\T(\mu_p, \lambda) \leq \frac{C_\infty \log p}{\sqrt{p}}.
\end{equation}
In light of \eqref{eq:ET-disc-Sob}, we have thus recovered a well-known result of P\'olya and Vinogradov \cite{Polya, Vinogradov}.

Establishing lower bounds seems more challenging, but we can handle the case $p = 2$ explicitly.
Indeed, \eqref{eq:Parseval} and \eqref{eq:Gauss} imply
\begin{equation}
  W_2^\T(\mu_p, \lambda)^2 = \frac 1 {4\pi^2} \sum_{k \in \Z \setminus \{0\}} \frac{\abs{\h \mu_p(k)}^2}{k^2} = \frac 1 {4 \pi^2}\left(\sum_{k \in p\Z \setminus \{0\}} \frac 1 {k^2} + \frac 1 p \sum_{k \in \Z \setminus p \Z} \frac 1 {k^2}\right).
\end{equation}
Rearranging these sums, we obtain
\begin{equation}
  W_2^\T(\mu_p, \lambda) = \sqrt{\frac{p^2 + p - 1}{12p^3}} \geq \frac{1}{\sqrt{12 p}}.
\end{equation}
It follows that \eqref{eq:residue-finite-r} is tight up to the value of $C_r$ for all $r \in [2, \infty)$.
However, assuming the generalized Riemann hypothesis, Montgomery and Vaughan~\cite{MV} showed that \eqref{eq:residue-infinity} can be improved to
\begin{equation}
  W_\infty^\T(\mu_p, \lambda) \leq \frac{C \log \log p}{\sqrt{p}}.
\end{equation}
A construction of Paley \cite{Paley} suggests this is optimal up to the value of $C$.

\bibliographystyle{colestyle}
\bibliography{Wasserstein-irregularity-arXiv}

\begin{thebibliography}{10}
\providecommand{\url}[1]{\texttt{#1}}
\providecommand{\urlprefix}{URL }
\providecommand{\eprint}[2][]{\url{#2}}

\bibitem{BC}
J.~Beck, W.~W.~L. Chen.
\newblock \emph{Irregularities of distribution}, \emph{Cambridge Tracts in
  Mathematics}, vol.~89.
\newblock Cambridge University Press, Cambridge (1987).
\newblock
  \href{https://mathscinet.ams.org/mathscinet-getitem?mr=903025}{{MR903025}}.

\bibitem{BB}
J.-D. Benamou, Y.~Brenier.
\newblock A computational fluid mechanics solution to the {M}onge-{K}antorovich
  mass transfer problem.
\newblock \emph{Numer. Math.} \textbf{84} (2000) 375--393.
\newblock
  \href{https://mathscinet.ams.org/mathscinet-getitem?mr=1738163}{{MR1738163}}.

\bibitem{Bilyk}
D.~Bilyk.
\newblock On {R}oth's orthogonal function method in discrepancy theory.
\newblock \emph{Unif. Distrib. Theory} \textbf{6} (2011) 143--184.
\newblock
  \href{https://mathscinet.ams.org/mathscinet-getitem?mr=2817765}{{MR2817765}}.

\bibitem{BS}
L.~{Brown}, S.~{Steinerberger}.
\newblock On the {W}asserstein distance between classical sequences and the
  {L}ebesgue measure.
\newblock \emph{arXiv e-prints}  (2019)
  \href{https://arxiv.org/abs/1909.09046}{1909.09046}.

\bibitem{Chen}
W.~W.~L. Chen.
\newblock On irregularities of distribution.
\newblock \emph{Mathematika} \textbf{27} (1980) 153--170.
\newblock
  \href{https://mathscinet.ams.org/mathscinet-getitem?mr=610701}{{MR610701}}.

\bibitem{Davenport}
H.~Davenport.
\newblock Note on irregularities of distribution.
\newblock \emph{Mathematika} \textbf{3} (1956) 131--135.
\newblock
  \href{https://mathscinet.ams.org/mathscinet-getitem?mr=82531}{{MR82531}}.

\bibitem{DSS}
J.~Delon, J.~Salomon, A.~Sobolevski.
\newblock Fast transport optimization for {M}onge costs on the circle.
\newblock \emph{SIAM J. Appl. Math.} \textbf{70} (2010) 2239--2258.
\newblock
  \href{https://mathscinet.ams.org/mathscinet-getitem?mr=2678036}{{MR2678036}}.

\bibitem{ET1}
P.~Erd\H{o}s, P.~Tur\'{a}n.
\newblock On a problem in the theory of uniform distribution {I}.
\newblock \emph{Nederl. Akad. Wetensch., Proc.} \textbf{51} (1948) 1146--1154.
\newblock
  \href{https://mathscinet.ams.org/mathscinet-getitem?mr=27895}{{MR27895}}.

\bibitem{ET2}
---{}---{}---.
\newblock On a problem in the theory of uniform distribution {II}.
\newblock \emph{Nederl. Akad. Wetensch., Proc.} \textbf{51} (1948) 1262--1269.
\newblock
  \href{https://mathscinet.ams.org/mathscinet-getitem?mr=27896}{{MR27896}}.

\bibitem{Ganelius}
T.~Ganelius.
\newblock Some applications of a lemma on {F}ourier series.
\newblock \emph{Publ. Inst. Math.} \textbf{11} (1957) 9--18.
\newblock
  \href{https://mathscinet.ams.org/mathscinet-getitem?mr=93684}{{MR93684}}.

\bibitem{GS}
C.~R. Givens, R.~M. Shortt.
\newblock A class of {W}asserstein metrics for probability distributions.
\newblock \emph{Michigan Math. J.} \textbf{31} (1984) 231--240.
\newblock
  \href{https://mathscinet.ams.org/mathscinet-getitem?mr=752258}{{MR752258}}.

\bibitem{Halasz}
G.~Hal\'{a}sz.
\newblock On {R}oth{'}s method in the theory of irregularities of point
  distributions.
\newblock \emph{Recent progress in analytic number theory} \textbf{2} (1981)
  79--94.
\newblock
  \href{https://mathscinet.ams.org/mathscinet-getitem?mr=637361}{{MR637361}}.

\bibitem{KR}
L.~V. Kantorovi\v{c}, G.~{\v{S}}. Rubin\v{s}te\u{\nakedi}n.
\newblock On a space of completely additive functions.
\newblock \emph{Vestnik Leningrad Univ.} \textbf{13} (1958) 52--59.
\newblock
  \href{https://mathscinet.ams.org/mathscinet-getitem?mr=102006}{{MR102006}}.

\bibitem{K2}
J.~F. Koksma.
\newblock A general theorem from the theory of uniform distribution modulo 1.
\newblock \emph{Mathematica, Zutphen. B.} \textbf{11} (1942) 7--11.
\newblock
  \href{https://mathscinet.ams.org/mathscinet-getitem?mr=15094}{{MR15094}}.

\bibitem{K1}
---{}---{}---.
\newblock Some integrals in the theory of uniform distribution modulo 1.
\newblock \emph{Mathematica, Zutphen. B.} \textbf{11} (1942) 49--52.
\newblock
  \href{https://mathscinet.ams.org/mathscinet-getitem?mr=15095}{{MR15095}}.

\bibitem{Kolmogorov}
A.~Kolmogorov.
\newblock Sulla determinazione empirica di una legge di distribuzione.
\newblock \emph{Giorn. Ist. Ital. Attuari} \textbf{4} (1933) 83--91.

\bibitem{Lerch}
M.~Lerch.
\newblock Question 1547.
\newblock \emph{L{'}Intermediaire Math} \textbf{11} (1904) 144--145.

\bibitem{MV}
H.~L. Montgomery, R.~C. Vaughan.
\newblock Exponential sums with multiplicative coefficients.
\newblock \emph{Invent. Math.} \textbf{43} (1977) 69--82.
\newblock
  \href{https://mathscinet.ams.org/mathscinet-getitem?mr=457371}{{MR457371}}.

\bibitem{OV}
F.~Otto, C.~Villani.
\newblock Generalization of an inequality by {T}alagrand and links with the
  logarithmic {S}obolev inequality.
\newblock \emph{J. Funct. Anal.} \textbf{173} (2000) 361--400.
\newblock
  \href{https://mathscinet.ams.org/mathscinet-getitem?mr=1760620}{{MR1760620}}.

\bibitem{Paley}
R.~E. A.~C. Paley.
\newblock A theorem on characters.
\newblock \emph{J. London Math. Soc.} \textbf{7} (1932) 28--32.
\newblock
  \href{https://mathscinet.ams.org/mathscinet-getitem?mr=1574456}{{MR1574456}}.

\bibitem{Peyre}
R.~Peyre.
\newblock Comparison between ${W}_2$ distance and $\dot{H}^{-1}$ norm, and
  localisation of {W}asserstein distance.
\newblock \emph{ArXiv e-prints}  (2011)
  \href{https://arxiv.org/abs/1104.4631}{1104.4631}.

\bibitem{Polya}
G.~P{\'o}lya.
\newblock {\"U}ber die {V}erteilung der quadratischen {R}este und {N}ichtreste.
\newblock \emph{G{\"o}ttingen Nachrichten}  (1918) 21--29.

\bibitem{Proinov}
P.~D. Pro\u{\nakedi}nov.
\newblock On irregularities of distribution.
\newblock \emph{C. R. Acad. Bulgare Sci.} \textbf{39} (1986) 31--34.
\newblock
  \href{https://mathscinet.ams.org/mathscinet-getitem?mr=875938}{{MR875938}}.

\bibitem{PG}
P.~D. Pro\u{\nakedi}nov, V.~S. Grozdanov.
\newblock On the diaphony of the van der {C}orput--{H}alton sequence.
\newblock \emph{J. Number Theory} \textbf{30} (1988) 94--104.
\newblock
  \href{https://mathscinet.ams.org/mathscinet-getitem?mr=960236}{{MR960236}}.

\bibitem{Roth}
K.~F. Roth.
\newblock On irregularities of distribution.
\newblock \emph{Mathematika} \textbf{1} (1954) 73--79.
\newblock
  \href{https://mathscinet.ams.org/mathscinet-getitem?mr=66435}{{MR66435}}.

\bibitem{Santambrogio}
F.~Santambrogio.
\newblock \emph{Optimal transport for applied mathematicians}, \emph{Progress
  in Nonlinear Differential Equations and their Applications}, vol.~87.
\newblock Birkh\"{a}user/Springer, Cham (2015).
\newblock
  \href{https://mathscinet.ams.org/mathscinet-getitem?mr=3409718}{{MR3409718}}.

\bibitem{Schmidt}
W.~M. Schmidt.
\newblock Irregularities of distribution {VII}.
\newblock \emph{Acta Arith.} \textbf{21} (1972) 45--50.
\newblock
  \href{https://mathscinet.ams.org/mathscinet-getitem?mr=319933}{{MR319933}}.

\bibitem{Steinerberger}
S.~Steinerberger.
\newblock {W}asserstein distance, {F}ourier series and applications.
\newblock \emph{ArXiv e-prints}  (2018)
  \href{https://arxiv.org/abs/1803.08011}{1803.08011}.

\bibitem{Strassen}
V.~Strassen.
\newblock The existence of probability measures with given marginals.
\newblock \emph{Ann. Math. Statist.} \textbf{36} (1965) 423--439.
\newblock
  \href{https://mathscinet.ams.org/mathscinet-getitem?mr=177430}{{MR177430}}.

\bibitem{vAE1}
T.~van Aardenne-Ehrenfest.
\newblock Proof of the impossibility of a just distribution of an infinite
  sequence of points over an interval.
\newblock \emph{Nederl. Akad. Wetensch., Proc.} \textbf{48} (1945) 266--271.
\newblock
  \href{https://mathscinet.ams.org/mathscinet-getitem?mr=15143}{{MR15143}}.

\bibitem{vdC1}
J.~G. van~der Corput.
\newblock Verteilungsfunktionen {I}.
\newblock \emph{Nederl. Akad. Wetensch., Proc.} \textbf{38} (1935) 813--821.

\bibitem{vdC2}
---{}---{}---.
\newblock Verteilungsfunktionen {II}.
\newblock \emph{Nederl. Akad. Wetensch., Proc.} \textbf{38} (1935) 1058--1068.

\bibitem{Villani}
C.~Villani.
\newblock \emph{Topics in optimal transportation}.
\newblock 58. American Mathematical Society, Providence, RI (2003).
\newblock
  \href{https://mathscinet.ams.org/mathscinet-getitem?mr=1964483}{{MR1964483}}.

\bibitem{Vinogradov}
I.~M. Vinogradov.
\newblock {\"U}ber die {V}erteilung der quadratischen {R}este und {N}ichtreste.
\newblock \emph{J. Soc. Phys. Math. Univ. Permi} \textbf{2} (1919) 1--14.

\bibitem{Zinterhof}
P.~Zinterhof.
\newblock \"{U}ber einige {A}bsch\"{a}tzungen bei der {A}pproximation von
  {F}unktionen mit {G}leichverteilungsmethoden.
\newblock \emph{Sitzungsber. \"{O}sterr. Akad. Wiss. Math.-Naturwiss. Kl. II}
  \textbf{185} (1976) 121--132.
\newblock
  \href{https://mathscinet.ams.org/mathscinet-getitem?mr=501760}{{MR501760}}.

\bibitem{SZ}
P.~Zinterhof, H.~Stegbuchner.
\newblock Trigonometrische {A}pproximation mit {G}leichverteilungsmethoden.
\newblock \emph{Studia Sci. Math. Hungar.} \textbf{13} (1978) 273--289.
\newblock
  \href{https://mathscinet.ams.org/mathscinet-getitem?mr=620142}{{MR620142}}.

\end{thebibliography}

\end{document}